\let\oldding\ding
\renewcommand{\ding}[2][1]{\scalebox{#1}{\oldding{#2}}}
\newtheorem{theorem}{Theorem}
\newtheorem{lemma}{Lemma}
\newtheorem{proposition}{Proposition}
\newtheorem{proof}{Proof}
\newtheorem{corollary}{Corollary}
\pgfplotsset{compat=1.5}
\apptocmd{\sloppy}{\hbadness 10000\relax}{}{}
\newcommand*{\rom}[1]{\expandafter\@slowromancap\romannumeral #1@}
\newcommand{\norm}[1]{\left\lVert#1\right\rVert}
\begin{document}

\title{Contraction Analysis of Nonlinear DAE Systems}



\author{\IEEEauthorblockN{Hung D. Nguyen, Thanh Long Vu, Jean-Jacques Slotine, Konstantin Turitsyn}
\IEEEauthorblockA{Department of Mechanical Engineering,
MIT, Cambridge, MA 02139 USA}
\thanks{Corresponding author: Hung D. Nguyen (email: hunghtd@mit.edu).}}

\markboth{IEEE Transactions on Automatic Control, 2016}%
{Shell \MakeLowercase{\textit{et al.}}: IEEE Transactions on Automatic Control}

\IEEEtitleabstractindextext{%
\begin{abstract}
This paper studies the contraction properties of nonlinear differential-algebraic equation (DAE) systems. Specifically we develop scalable techniques for constructing the attraction regions associated with a particular stable equilibrium, by establishing the relation between the contraction rates of the original systems and the corresponding virtual extended systems. We show that for a contracting DAE system, the reduced system always contracts faster than the extended ones; furthermore, there always exists an extension with contraction rate arbitrarily close to that of the original system. The proposed construction technique is illustrated with a power system example in the context of transient stability assessment.

\end{abstract}

\begin{IEEEkeywords}
Contraction analysis, DAE, linear stability, Lyapunov, power systems, transient stability.
\end{IEEEkeywords}}

\maketitle

\IEEEdisplaynontitleabstractindextext
\IEEEpeerreviewmaketitle

\section{Introduction}
Differential-algebraic equations (DAE)-a generalization of ordinary-differential equations (ODE)-arise in many science and engineering problems, including networks, multibodies, optimal control, compressed fluid, etc. \cite{schulz2003four}. Typically, algebraic constraints result from multiple time-scale perturbation theory, when the fast degrees of freedom are assumed to stay on equilibrium manifold. In typical electrical and mechanical applications the algebraic relations represent the interconnection constraints, which can be considered static on the time-scales of system evolution. However, algebraic relations may be also useful for lifted representations of the purely differential systems. For instance, additional variables and relations can be used to represent any polynomial nonlinearity in a quadratic DAE form. Hence, DAE systems provide a powerful framework for studying nonlinear systems of very general structure. This work is motivated by the DAE representations of the power system models, but the results are presented in a general form. 

The specific problem that motivates our study is the problem of approximating the region of attraction of DAE equilibrium points. The normal operating points of modern power systems lack global stability because of the nonlinearities naturally appearing in these systems. Characterization of the attraction region and more generally assessment of the system security, i.e. its ability to sustain all kinds of faults and disturbances, is an essential task of modern power system operations. As will be shown throughout the paper, the contraction provides a natural framework for constructing the approximations of the attraction region for a broad range of nonlinear DAE problems, such as those arising in power systems. 

Transient stability analysis is a common engineering procedure referring to the ability of the system to converge to a stable post-fault equilibrium after being subject to disturbances. The incremental stability introduced in \cite{lohmiller1998contraction} suggests an alternative way to look at the convergence of the post-fault trajectories. In the light of contraction theory, the virtual displacements of the states tend to zero as the time goes to infinity, or in other words, all the trajectories shrink and converge to the nominal one. Contraction analysis becomes a powerful tool for nonlinear analysis and control \cite{slotine2003modular,lohmiller1998contraction,slotine1991applied,del2013contraction}. The key property of the contraction is the preservation under different system combinations, which is advantageous in network analysis.

In this paper we focus on the contraction analysis for nonlinear DAE systems. Specifically we develop a practical way of constructing the attraction regions by determining the relation between the contraction rates of the original DAE systems and its extension to virtual dynamics in differential-algebraic space. The extended system can be thought of as a virtual differential system that reduces to a given DAE after the restriction of a subset of variables to their equilibrium manifold. There can be multiple extensions of a given DAE system, each characterized by different contraction rates. However, we show that the contraction rate of the reduced system is always higher, and on the other hand, there always exists an extension with a contraction rate arbitrarily close to the original DAE system. Our results hold for the most commonly used $1$, $2$, and $\infty$ norms, but can likely be extended to more general cases.  We use the theoretical results to develop a scalable technique for constructing ellipsoidal inner approximations of contraction regions from the $2$ norm contraction metric. We illustrate the technique with a practical example from power systems.

\section{Main results} \label{sec:main}
As motivated by the dynamics of electrical power systems, we constrain ourselves to semi-explicit index $1$ structural form as below:
\begin{align} \label{eq:dif}
 \dot{\mathbf{x}} &= f(\mathbf{x},\mathbf{y}), \\
 0 & = g(\mathbf{x},\mathbf{y}). \label{eq:alg}
\end{align}
In this representation, vector $\mathbf{x} \in \mathbb{R}^n$ corresponds to dynamic state variables, $\mathbf{y} \in \mathbb{R}^m$ refers to algebraic variables (whose dynamics is assumed to be fast/instantaneous relative to the dynamics of the state variables). For this class of systems, it is impossible to obtain equivalent ODEs.

For convenience, reduction techniques are widely used to eliminate the algebraic variables. Yet this practice may prohibit one from exploring the underlying structure of the DAE form. To that end a number of works in the literature concentrate on the original systems rather than the reduced ones, for instant, in the context of stability analysis of the descriptor form as below:
\begin{equation} \label{eq:noneq}
 E \dot{\mathbf{z}} = h(\mathbf{z}),
\end{equation}
with $\mathbf{z}^T = [\mathbf{x}^T, \mathbf{y}^T]$, $h^T = [f^T, g^T]$ and $E$ being a diagonal $\mathbb{R}^{(n+m)\times(n+m)}$ matrix with $E_{ii} = 1 $ for $i \leq n$ and $E_{ii} = 0$ otherwise \cite{Lewis1985, bond2009stable, wu1994stability,MASUBUCHI1997669}.

For any given differential state $\mathbf{x}$ the equation \eqref{eq:alg} may have multiple or no solutions for $\mathbf{y}$. In engineering and natural systems that motivate this study, disappearance of all the solutions is usually an indicator of inappropriate modeling that should be fixed accordingly, typically by introducing the fast dynamics of the algebraic states in the model. We don't consider this scenario in our work, and we assume that for every $\mathbf{x}$ there exists at least one solution $\mathbf{Y}(\mathbf{x})$ of the algebraic system of equations \eqref{eq:alg}. For every solution branch we can naturally define the domain $\mathbf{x} \in \mathcal{R}$ where such a solution exists and can be tracked via homotopy/continuation procedure. This domain is characterized by non-singularity of the algebraic Jacobian: 
\begin{equation}
    \mathcal{R} =\left\{\mathbf{x}:\,\,\mathrm{det}\left(\frac{\partial \mathbf{g}}{\partial\mathbf{y}}\Big|_{\mathbf{y} = \mathbf{Y}(\mathbf{x})}\right)\neq 0\right\}.
\end{equation}
We restrict our analysis only to such a domain associated with a specific solution branch. For a system of differential-algebraic equations \eqref{eq:dif}, \eqref{eq:alg} we introduce the Jacobian  defined as
\begin{equation}
    J(\mathbf{x}, \mathbf{y}) = \begin{bmatrix} \partial \mathbf{f}/\partial\mathbf{x} & \partial \mathbf{f}/\partial\mathbf{y} \\
    \partial \mathbf{g}/\partial\mathbf{x} & \partial \mathbf{g}/\partial\mathbf{y}
    \end{bmatrix}.
\end{equation}
To simplify the notations we also define its restriction to the algebraic manifold \eqref{eq:alg} as follows:
\begin{equation}
    J(\mathbf{x}, \mathbf{Y}(\mathbf{x})) = 
    \begin{bmatrix}
    A & B \\
    C & D
    \end{bmatrix}.
\end{equation}
One of the primary goals of this paper is to provide a characterization of the contraction and invariant regions in the state space of a DAE system. We formally define the contraction domains $\mathcal{C}_p$ as set of differential states $\mathbf{x}$ for which there exists an invertible metric $\theta(\mathbf{x}) \in \mathbb{R}^{n \times n}$ such that the differential equation $\dot{\mathbf{x}} = \mathbf{f}(\mathbf{x},\mathbf{Y}(\mathbf{x}))$ is locally contracting with respect to this metric with some rate $\beta > 0$. Given that for any infinitesimal displacement $\delta \mathbf{x}$ we have $\delta \mathbf{y} = - D^{-1}C \delta \mathbf{x}$ the standard contraction arguments presented in \cite{slotine1991applied,lohmiller1998contraction} lead to the following Proposition.
\begin{proposition} \label{prop:1}
The DAE system \eqref{eq:dif} \eqref{eq:alg} is contracting with respect to the metric $\theta(\mathbf{x})$ in the domain $\mathcal{C}_p$ if for all $\mathbf{x} \in \mathcal{C}_{p}$ one has $\mu_p\left( F_r\right) \leq -\beta$ with some $\beta >0 $ and 
\begin{align}
    F_r &=  \dot{\theta}\theta^{-1} + \theta (A-BD^{-1}C)\theta^{-1}. \label{eq:Fr}
\end{align}
\end{proposition}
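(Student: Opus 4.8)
The plan is to reduce the DAE to an effective ODE restricted to the chosen solution branch and then apply the standard differential (contraction) analysis to that reduced flow. On the domain $\mathcal{R}$ the algebraic Jacobian $D = \partial\mathbf{g}/\partial\mathbf{y}$ is nonsingular, so the implicit function theorem guarantees that the branch map $\mathbf{Y}(\mathbf{x})$ is well-defined and continuously differentiable, and the reduced system $\dot{\mathbf{x}} = \mathbf{f}(\mathbf{x},\mathbf{Y}(\mathbf{x}))$ is a genuine ordinary differential equation whose flow stays on the branch. This makes contraction theory for ODEs directly applicable.

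First I would compute the variational dynamics of the reduced flow. Differentiating the algebraic identity $\mathbf{g}(\mathbf{x},\mathbf{Y}(\mathbf{x})) = 0$ with respect to $\mathbf{x}$ gives $C\,\delta\mathbf{x} + D\,\delta\mathbf{y} = 0$, hence $\delta\mathbf{y} = -D^{-1}C\,\delta\mathbf{x}$, which is exactly the relation quoted before the statement and which is legitimate precisely because $D$ is invertible on $\mathcal{R}$. Substituting this into the variation of the differential equation, $\dot{\delta\mathbf{x}} = A\,\delta\mathbf{x} + B\,\delta\mathbf{y}$, yields the closed variational system $\dot{\delta\mathbf{x}} = (A - BD^{-1}C)\,\delta\mathbf{x}$, so the effective Jacobian of the reduced flow is the Schur complement $A - BD^{-1}C$.

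Next I would introduce the metric via the change of variables $\delta\mathbf{z} = \theta(\mathbf{x})\,\delta\mathbf{x}$. Differentiating in time along a trajectory, $\dot{\delta\mathbf{z}} = \dot{\theta}\,\delta\mathbf{x} + \theta\,\dot{\delta\mathbf{x}} = \big(\dot{\theta}\theta^{-1} + \theta(A - BD^{-1}C)\theta^{-1}\big)\delta\mathbf{z} = F_r\,\delta\mathbf{z}$, recovering \eqref{eq:Fr}. The standard one-sided property of the matrix measure then gives $\frac{d}{dt}\|\delta\mathbf{z}\|_p \leq \mu_p(F_r)\,\|\delta\mathbf{z}\|_p$; under the hypothesis $\mu_p(F_r) \leq -\beta$ for all $\mathbf{x}\in\mathcal{C}_p$, the comparison lemma yields $\|\delta\mathbf{z}(t)\|_p \leq e^{-\beta t}\|\delta\mathbf{z}(0)\|_p$, so every infinitesimal displacement contracts exponentially and the DAE along the branch is contracting with rate $\beta$.

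The routine parts are the chain-rule computation and the matrix-measure differential inequality, which is classical for the $1$, $2$, and $\infty$ norms. The step requiring the most care is confining the whole argument to a single solution branch: both the smoothness of $\mathbf{Y}(\mathbf{x})$ and the identity $\delta\mathbf{y} = -D^{-1}C\,\delta\mathbf{x}$ rely on $D$ remaining nonsingular, so the instantaneous estimate is valid only on $\mathcal{R}$. To upgrade the pointwise bound $\mu_p(F_r)\le-\beta$ into genuine exponential contraction in time, one must take $\mathcal{C}_p\subseteq\mathcal{R}$ and verify that the region under consideration is forward invariant, so that trajectories never cross the singular set where $\det D = 0$ and the reduced description breaks down.
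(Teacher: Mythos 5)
Your proof is correct and follows essentially the same route as the paper, which merely notes that $\delta\mathbf{y} = -D^{-1}C\,\delta\mathbf{x}$ on the algebraic manifold yields the variational dynamics $\dot{\delta\mathbf{v}} = F_r\,\delta\mathbf{v}$ for $\delta\mathbf{v} = \theta\,\delta\mathbf{x}$ and then invokes the standard contraction analysis of Lohmiller and Slotine. You have simply filled in the details the paper delegates to that reference (implicit function theorem on $\mathcal{R}$, the Schur-complement Jacobian, the matrix-measure differential inequality), and your closing caveat about forward invariance matches the paper's own discussion of invariant regions immediately after the proposition.
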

The term $\dot\theta$ in \eqref{eq:Fr} represents the derivative of the metric along the trajectory and is formally defined for DAE systems as
\begin{equation} \label{eq:theta}
    \dot\theta = 
    \left(\frac{\partial\theta}{\partial \mathbf{x}} \right)^T \mathbf{f}(\mathbf{x}, \mathbf{Y}(\mathbf{x})).
\end{equation}
The matrix measure $\mu_p(M)$ of a matrix $M$ is defined as $\mu_p(M) := \lim\limits_{h\rightarrow0^+}\frac{1}{h}(||\mathbb{1}+h M||_p - 1)$ following \cite{VIDYASAGAR197890}. The standard matrix measures as well as vector norms are listed in Table \ref{table:matmu}.
\begin{table}[t]
\centering
\begin{tabular}{ |c|c| } 
 \hline
 Vector norm, $\norm{\cdot}$ & Matrix measure, $\mu_p(M)$ \\ 
 \hline
 $\norm x_1 = \sum_{i}|x_i|$ & $\mu_1(M)=\max_j(m_{jj}+\sum_{i\neq j}|m_{ij}|)$ \\ 
  \hline
 $\norm x_2 = (\sum_{i}|x_i|^2)^{1/2}$ & $\mu_2(M)=\max_i(\lambda_i\{\frac{M+M^T}{2}\})$ \\ 
 \hline
 $\norm x_\infty = \max_{i}|x_i|$ & $\mu_\infty(M)=\max_i(m_{ii}+\sum_{j\neq i}|m_{ij}|)$\\
 \hline
\end{tabular}
\caption{Standard matrix measures}
\label{table:matmu}
\end{table}
The proof of proposition \ref{prop:1} directly follows from the contraction analysis for dynamical system presented in \cite{lohmiller1998contraction}. The matrix $F_r$ appears naturally from the dynamic equation on $\delta \mathbf{v} = \theta \delta\mathbf{x}$ given by  $\dot{\delta \mathbf{v}} = F_r \delta\mathbf{v}$. Hereafter we refer to $F_r$ as the \textit{generalized reduced Jacobian matrix}. 

The standard contraction theory arguments suggest that for any two trajectories $\mathbf{x}_1(t), \mathbf{x}_2(t)$ that both remain within the contraction region $\mathcal{C}_{p}$ during the interval $[t_1,t_2]$ satisfy $d(\mathbf{x}_1(t_2), \mathbf{x}_2(t_2)) \leq d(\mathbf{x}_1(t_1), \mathbf{x}_2(t_1))\exp(-\beta (t_2-t_1))  $ where $d$ is the distance associated with the metric $\theta$. The assumption that both of the trajectories stay within the contraction region is critical for this result and can be verified only after showing the existence of an invariant domain $\mathcal{I}_{p} \subset \mathcal{C}_{p}$ satisfying:
\begin{equation}
    \mathbf{x}(t) \in \mathcal{I}_{p} \Longrightarrow 
    \forall t' \geq t: \quad  \mathbf{x}(t') \in \mathcal{I}_{p}.
\end{equation}
Constructing invariant regions is usually a difficult aspect of applying contraction theory to systems which are not globally contracting. One straightforward strategy for constructing invariant regions exists for systems that have an equilibrium point $\mathbf{x}_\star$ inside the contraction domain satisfying $\mathbf{f}(\mathbf{x}_\star,\mathbf{Y}(\mathbf{x}_\star)) = 0$. In this case, any ball $\mathcal{B}_r= \{\mathbf{x}: d(\mathbf{x},\mathbf{x}_\star) \leq r\}$ that lies within the contraction region $\mathcal{C}_{p}$ defines an invariant region, i.e.
$\mathcal{B}_r \subset \mathcal{C}_{p} \Longrightarrow \mathcal{B}_r \subset \mathcal{I}_{p}$.
By construction, such a ball also provides an inner approximation for the attraction region of $\mathbf{x}_\star$ and can be naturally used in a variety of practical applications such as security assessment of power systems \cite{HungTuritsyn2015Robust}. In this we develop a general framework for constructing such invariant regions for a broad class of nonlinearities, and we present a specific power system example in sections \ref{sec:conreg}.

The key challenge in using the function $F_r$ directly is its highly nonlinear nature. Even for simple polynomial nonlinearities of $\mathbf{f},\mathbf{g}$ the function $F_r$ involves an inversion of the matrix $D(\mathbf{x})$. From a practical perspective, it is therefore desirable to formulate conditions equivalent to contraction as defined in Propostion \ref{prop:1} that do not involve any inversions of matrices $A, B, C, D$ which are nonlinearly dependent on $\mathbf{x}$. In order to achieve this goal we derive equivalent representation of the contraction condition that doesn't require elimination of the local variables and is more suitable for analysis. We introduce the \textit{generalized unreduced Jacobian matrix} as follows:
\begin{align} 
   F 
     &= \begin{bmatrix} F_r + \theta R^T C \theta^{-1} & \theta R^T D \rho^{-1} \\
   Q^T C \theta^{-1} & Q^T D \rho^{-1}\end{bmatrix}.\label{eq:F}
 \end{align}
The generalized unreduced Jacobian $F$ depends on the metric $\theta$ defined as in the previous discussion, another metric $\rho$ associated with the $\mathbf{y}$ variable and two auxiliary matrices $Q \in \mathbb{R}^{m\times m}$ and $R \in \mathbb{R}^{m\times n}$. Formally, this Jacobian matrix may be associated with a virtual extended ODE representation of the original system of the form
\begin{align}
 \dot{\delta \mathbf{v}} 
 &= F_r \delta\mathbf{v} + \theta R^T(C\theta^{-1}\delta \mathbf{v} + D\rho^{-1}\delta \mathbf{u}), \\
 \dot{\delta \mathbf{u}}  &= Q^T(C\theta^{-1}\delta \mathbf{v} + D\rho^{-1}\delta \mathbf{u}).
\end{align}
where $\delta \mathbf{u} = \rho \delta \mathbf{y}$ and so the expression $C\theta^{-1}\delta \mathbf{v} + D\rho^{-1}\delta \mathbf{u} = C\delta \mathbf{x} + D \delta \mathbf{y} = 0$ defines the algebraic manifold. Whenever the dynamics of $\delta \mathbf{u}$ can be considered fast, the restriction of the $\delta \mathbf{u}$ variables to their equilibrium manifold results in the original DAE systems. Therefore, this representation provides a family of extended representations that reduce to the same original system. It  will be shown that this representation is useful for characterization of the contraction and invariant regions. 

The key property important for the analysis is defined in the following relation:
\begin{align} \label{eq:FFr}
F\delta{\mathbf{w}} &= \begin{bmatrix} F_r + \theta R^T C \theta^{-1} & \theta R^T D \rho^{-1} \\
  Q^T C \theta^{-1} & Q^T D \rho^{-1} \end{bmatrix}\begin{bmatrix}  \theta\delta{\mathbf{x}} \\  \rho\delta{\mathbf{y}} \end{bmatrix}\nonumber \\
& = \begin{bmatrix}F_r\theta \delta{\mathbf{x}} \\ 0 \end{bmatrix} + \begin{bmatrix}
\theta R^T(C \delta \mathbf{x}+ D \delta \mathbf{y}) \\ Q^T(C \delta \mathbf{x} + D \delta \mathbf{y}) \end{bmatrix}.
\end{align}
where we have introduced the new variables vector $\delta \mathbf{w} \triangleq \begin{bmatrix} \delta \mathbf{v} \\ \delta \mathbf{u} \end{bmatrix}$. This observation allows us to formulate the following central results of this work.

\subsection{Forward theorems: from extended systems to reduced ones}
\begin{lemma} \label{lem:1}
Define 

\begingroup\makeatletter\def\f@size{9.5}\check@mathfonts
\def\maketag@@@#1{\hbox{\m@th\large\normalfont#1}}%
\begin{align}
\gamma 
= \biggl\| \begin{bmatrix}\delta \mathbf{v} + h F_r \delta \mathbf{v} \\ \delta \mathbf{u} \end{bmatrix} \biggr\|_p - \biggl\|\begin{bmatrix}\delta \mathbf{v} \\ \delta \mathbf{u} \end{bmatrix}\biggr\|_p \nonumber
- \left(\|\delta \mathbf{v} + h F_r \delta \mathbf{v}\|_p - \|\delta \mathbf{v}\|_p \right)
\end{align}\endgroup

where $h > 0$. Then for all $p \geq 1$, $\gamma \geq 0$ if the following condition holds.

\begin{equation} \label{eq:deltaw}
\biggl\| \begin{bmatrix}\delta \mathbf{v} + h F_r \delta \mathbf{v} \\ \delta \mathbf{u} \end{bmatrix} \biggr\|_p - \biggl\|\begin{bmatrix}\delta \mathbf{v} \\ \delta \mathbf{u} \end{bmatrix}\biggr\|_p \leq 0.
\end{equation}
\end{lemma}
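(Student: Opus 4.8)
The plan is to read this as a purely geometric statement about the $p$-norm of stacked vectors and to reduce it to a one-dimensional Lipschitz estimate. Writing $a = \delta\mathbf{v}$, $b = hF_r\delta\mathbf{v}$, $c = \delta\mathbf{u}$, I would introduce the scalars $S = \|a+b\|_p$, $T = \|a\|_p$ and $K = \|c\|_p$. The first step exploits the fact that the $p$-norm of a concatenation separates: for $p<\infty$ one has $\|[u^T,c^T]^T\|_p = (\|u\|_p^p + K^p)^{1/p}$, and for $p=\infty$ it equals $\max(\|u\|_p, K)$. Consequently both block norms appearing in $\gamma$ are values of the single scalar function $\phi(s) = (s^p + K^p)^{1/p}$ evaluated at $s=S$ and $s=T$, so that
\[
\gamma = \bigl[\phi(S) - \phi(T)\bigr] - (S - T).
\]
This collapses the whole statement into a question about the scalar function $\phi$ on $[0,\infty)$.

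The second step is to record the two elementary properties of $\phi$ that drive everything: it is nondecreasing and $1$-Lipschitz. Both follow at once from $\phi'(s) = \bigl(s^p/(s^p+K^p)\bigr)^{(p-1)/p}$, whose base lies in $[0,1]$ and whose exponent is nonnegative, so that $0 \le \phi'(s) \le 1$. Thus $\phi$ moves no faster than the identity and never decreases.

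The third step converts the hypothesis \eqref{eq:deltaw} into the scalar inequality $S \le T$. The condition reads exactly $\phi(S) \le \phi(T)$; for finite $p$, raising to the $p$-th power cancels the common term $K^p$ and yields $S^p \le T^p$, i.e. $S \le T$. With $0 \le S \le T$ in hand, the nondecreasing $1$-Lipschitz property gives $0 \le \phi(T)-\phi(S) \le T-S$, and rearranging this is precisely $\phi(S)-\phi(T) \ge S-T$, i.e. $\gamma \ge 0$. This settles the $1$ and $2$ norm cases cleanly.

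The step I expect to be the main obstacle is making the monotonicity extraction of the third step uniform across the norms of interest, and in particular the $p=\infty$ case. There $\phi(s) = \max(s,K)$ is only weakly monotone, being flat on $[0,K]$, so $\phi(S)\le\phi(T)$ no longer forces $S\le T$, and the Lipschitz inequality alone does not immediately deliver $\gamma \ge 0$. I would treat $p=\infty$ as a limiting case of the finite-$p$ estimate, checking that both the hypothesis and $\gamma$ pass to the limit as $p\to\infty$; the delicate point is that the strict monotonicity powering the finite-$p$ argument degenerates in that limit, so this is the one place where the reasoning requires genuine care rather than the routine reduction used above.
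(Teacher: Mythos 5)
For $1 \le p < \infty$ your proof is correct and complete, and it takes a genuinely cleaner route than the paper's. The paper fixes $\delta \mathbf{v}$ and $hF_r\delta\mathbf{v}$, differentiates $\gamma$ with respect to each $|\delta \mathbf{u}_j|$, obtains $\frac{\partial \gamma}{\partial |\delta \mathbf{u}_j|} = |\delta \mathbf{u}_j|^{p-1}\bigl(\|\cdot\|_p^{p(1/p-1)} - \|\cdot\|_p^{p(1/p-1)}\bigr) \ge 0$ from condition \eqref{eq:deltaw} together with $p(1/p-1)\le 0$, and concludes by monotonicity in $|\delta\mathbf{u}_j|$ plus the fact that $\gamma$ vanishes at $\delta \mathbf{u} = 0$. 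Strictly speaking, that path argument needs \eqref{eq:deltaw} to hold at all \emph{intermediate} values of $\delta \mathbf{u}$ between $0$ and its actual value, while the lemma only assumes it at the endpoint; your Step 3 supplies exactly the missing justification, since for finite $p$ the hypothesis at any single $K = \|\delta\mathbf{u}\|_p$ is equivalent to the $K$-independent inequality $S \le T$, which therefore propagates to every intermediate point. Your scalar reduction $\gamma = \phi(S) - \phi(T) - (S-T)$ with $\phi(s) = (s^p + K^p)^{1/p}$ nondecreasing and $1$-Lipschitz then closes the finite-$p$ case with no path argument at all. So your proof both reaches the paper's conclusion and quietly repairs a small rigor gap in the paper's own argument.

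Your instinct that $p = \infty$ is the real obstacle is vindicated, and in fact the situation is worse than you suggest: the $\infty$-norm version of the lemma is false as a standalone statement, so no limiting argument can rescue it without additional hypotheses. Take $n = m = 1$, $\delta\mathbf{v} = 1$, $hF_r\delta\mathbf{v} = 1$ (so $\delta\mathbf{v} + hF_r\delta\mathbf{v} = 2$), and $\delta\mathbf{u} = 3$: then $\gamma = \max(2,3) - \max(1,3) - (2-1) = -1 < 0$, while the hypothesis holds because $\max(2,3) - \max(1,3) = 0 \le 0$. This is precisely the flat region of $\phi(s) = \max(s,K)$ you identified, where \eqref{eq:deltaw} no longer forces $S \le T$. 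Note that your proposed strategy of passing the finite-$p$ estimate to the limit cannot work here, and neither can the paper's identical suggestion of ``taking the limit as $p$ goes to infinity'': the $\infty$-norm hypothesis does not imply the finite-$p$ hypothesis (in the example above, $(2^p + 3^p)^{1/p} > (1^p + 3^p)^{1/p}$, so \eqref{eq:deltaw} fails for every finite $p$). The $\infty$ case can only be salvaged using structure present where the lemma is applied --- in Theorem \ref{theo:p21} one has $h \to 0$ and $\delta\mathbf{u}_h = S\,\delta\mathbf{v}_h$ rather than arbitrary $\delta\mathbf{u}$ --- or, as the paper alternatively hints, by working directly with the explicit $\mu_\infty$ formula of Table \ref{table:matmu}. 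As stated, the lemma should be read as covering finite $p \ge 1$ only, and for that statement your proof is complete.
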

The proof of Lemma \ref{lem:1} is as the following. For fixed $\delta \mathbf{v}$ and $h F_r \delta \mathbf{v}$, $\gamma$ depends solely on $\delta \mathbf{u}$. Taking partial derivative of $\gamma$ with respect to $|\delta \mathbf{u}_j|$  using the definition of $p$ norm for a vector, i.e. $\|\mathbf{v}\|_p = \left( \sum_i{|v_i|^p}\right)^{1/p}$, yields the following:
\begin{align} \label{eq:delta}
\frac{\partial \gamma}{\partial |\delta \mathbf{u}_j|}
= |\delta \mathbf{u}_j|^{p-1} 
&\biggl( \biggl\| \begin{bmatrix}\delta \mathbf{v} + h F_r \delta \mathbf{v} \\ \delta \mathbf{u} \end{bmatrix} \biggr\|_p^{p(1/p -1)}\nonumber\\
&-  \biggl\|\begin{bmatrix}\delta \mathbf{v} \\ \delta \mathbf{u} \end{bmatrix}\biggr\|_p^{p(1/p -1)}\biggr).
\end{align}
On the other hand, the assumption $p \geq 1$ leads to $p(1/p-1) \leq 0$. This together with \eqref{eq:deltaw} and \eqref{eq:delta} concludes $\frac{\partial \gamma}{\partial |\delta \mathbf{u}_j|} \geq 0$ for all $j = 1,\dots,m$. In other words, $\gamma$ is indeed a monotonically increasing function with respect to the absolute value of each entry $\delta \mathbf{u}_j$. Moreover it can be seen that $\gamma$ vanishes when $\delta \mathbf{u} = 0$. Therefore for any non-zero $\delta \mathbf{u}$, $\gamma$ is non-negative.

For infinity norm one can prove the non-negativity of the partial derivatives $\frac{\partial \gamma}{\partial |\delta \mathbf{u}_j|}$ by taking the limit as $p$ goes to infinity and exploiting the assumption presented by \eqref{eq:deltaw}. Alternatively $\gamma$ can be directly evaluated using the matrix measure expression associated with infinity norm listed in Table \ref{table:matmu}.

With Lemma \ref{lem:1} we introduce the first central result as the following.

\begin{theorem} \label{theo:p21}
For the system $\dot{\mathbf{x}} = \mathbf{f}(\mathbf{x},\mathbf{Y}(\mathbf{x}))$ and metric function $\theta(\mathbf{x})$, and contracting extended system $F$ with $\mu_p(F) < 0$ characterized by the matrices $Q,R, \rho$, the following relation holds:
\begin{equation} \label{eq:FrF}
	\mu_p(F_r) \leq \mu_p(F) \nu_p(H).
\end{equation}
in which $S = \rho D^{-1} C \theta^{-1}$, $H = \begin{bmatrix}
    1 \\
    S
    \end{bmatrix}$,
and 
\begin{align}
\nu_p(H) = \min \limits_{\norm{v}_p=1} \norm{Hv}_p.
\end{align}
Note that for invertible $H$, one has $\nu_p(H) = 1/\norm{H^{-1}}_p$.
\end{theorem}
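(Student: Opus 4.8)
The plan is to read the entire statement through the one-sided definition of the matrix measure, $\mu_p(M)=\lim_{h\to0^+}\tfrac1h(\|\mathbb{1}+hM\|_p-1)$, and to route the reduction through the manifold identity \eqref{eq:FFr} and Lemma \ref{lem:1}. First I would restrict attention to displacements $\delta\mathbf{w}=\begin{bmatrix}\delta\mathbf{v}\\\delta\mathbf{u}\end{bmatrix}$ that lie on the algebraic manifold $C\delta\mathbf{x}+D\delta\mathbf{y}=0$. On this manifold the second bracket in \eqref{eq:FFr} vanishes, so $F\delta\mathbf{w}=\begin{bmatrix}F_r\delta\mathbf{v}\\0\end{bmatrix}$, while the constraint forces $\delta\mathbf{u}=\rho\delta\mathbf{y}=-\rho D^{-1}C\theta^{-1}\delta\mathbf{v}=-S\delta\mathbf{v}$. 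Since the $p$-norm depends only on the componentwise absolute values, the sign is immaterial and $\|\delta\mathbf{w}\|_p=\bigl\|\begin{bmatrix}\delta\mathbf{v}\\S\delta\mathbf{v}\end{bmatrix}\bigr\|_p=\|H\delta\mathbf{v}\|_p\geq\nu_p(H)\,\|\delta\mathbf{v}\|_p$, directly from the definition of $\nu_p(H)$ as the smallest gain of $H$. Note also that as $\delta\mathbf{x}$ (equivalently $\delta\mathbf{v}=\theta\delta\mathbf{x}$) ranges freely over $\mathbb{R}^n$, the constrained $\delta\mathbf{w}$ sweeps the whole manifold, so no directions are lost.

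Next I would build a two-sided sandwich on the block-norm increment. Writing $\delta\mathbf{w}+hF\delta\mathbf{w}=\begin{bmatrix}\delta\mathbf{v}+hF_r\delta\mathbf{v}\\\delta\mathbf{u}\end{bmatrix}$ and applying submultiplicativity of the induced norm, $\|\delta\mathbf{w}+hF\delta\mathbf{w}\|_p\leq\|\mathbb{1}+hF\|_p\,\|\delta\mathbf{w}\|_p$, yields the upper bound that the block increment is at most $(\|\mathbb{1}+hF\|_p-1)\|\delta\mathbf{w}\|_p$. Because $\mu_p(F)<0$, for all sufficiently small $h>0$ we have $\|\mathbb{1}+hF\|_p<1$, so this block increment is negative; this is precisely the hypothesis \eqref{eq:deltaw} of Lemma \ref{lem:1}. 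Invoking the lemma ($\gamma\geq0$) then supplies the matching lower bound $\|\delta\mathbf{v}+hF_r\delta\mathbf{v}\|_p-\|\delta\mathbf{v}\|_p$ on the block increment. Chaining these two bounds eliminates the algebraic coordinate entirely:
\[
\|\delta\mathbf{v}+hF_r\delta\mathbf{v}\|_p-\|\delta\mathbf{v}\|_p \;\leq\; (\|\mathbb{1}+hF\|_p-1)\,\|\delta\mathbf{w}\|_p .
\]

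The final step turns this pointwise estimate into the measure inequality. Since the factor $\|\mathbb{1}+hF\|_p-1$ is negative for small $h$, I may replace $\|\delta\mathbf{w}\|_p$ by its lower bound $\nu_p(H)\|\delta\mathbf{v}\|_p$ without reversing the inequality, obtaining $\|\delta\mathbf{v}+hF_r\delta\mathbf{v}\|_p-\|\delta\mathbf{v}\|_p\leq(\|\mathbb{1}+hF\|_p-1)\,\nu_p(H)\,\|\delta\mathbf{v}\|_p$. Dividing by $\|\delta\mathbf{v}\|_p$ and taking the supremum over $\delta\mathbf{v}\neq0$ converts the left-hand side into $\|\mathbb{1}+hF_r\|_p-1$, giving $\|\mathbb{1}+hF_r\|_p-1\leq(\|\mathbb{1}+hF\|_p-1)\,\nu_p(H)$; dividing by $h$ and letting $h\to0^+$ then yields $\mu_p(F_r)\leq\mu_p(F)\,\nu_p(H)$. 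The remark on invertible $H$ follows since then $\nu_p(H)=\min_{\|v\|_p=1}\|Hv\|_p=1/\|H^{-1}\|_p$.

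I expect the main obstacle to be the correct sign bookkeeping in the last substitution: replacing $\|\delta\mathbf{w}\|_p$ by $\nu_p(H)\|\delta\mathbf{v}\|_p$ preserves the inequality \emph{only} because the contraction hypothesis $\mu_p(F)<0$ forces the multiplying factor negative, so this is exactly the point where that assumption is indispensable rather than decorative. A secondary item deserving a careful word is the interchange of the supremum over $\delta\mathbf{v}$ with the limit $h\to0^+$ when identifying $\mu_p(F_r)$; this is the standard characterization of the logarithmic norm and is harmless here because the supremum is taken before the limit, but it should be stated explicitly.
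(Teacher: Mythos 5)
Your proof is correct and takes essentially the same route as the paper's: the manifold identity \eqref{eq:FFr} to convert $\begin{bmatrix} F_r\delta \mathbf{v} \\ 0\end{bmatrix}$ into $F\delta\mathbf{w}$, Lemma \ref{lem:1} as the key lower bound on the block increment, the estimate $\norm{\delta\mathbf{w}}_p=\norm{H\delta\mathbf{v}}_p\geq\nu_p(H)\norm{\delta\mathbf{v}}_p$, and the negativity of $\mu_p(F)$ to license the final substitution. The only organizational difference is that you derive a pointwise inequality for arbitrary $\delta\mathbf{v}$ and fixed small $h$, then take the supremum before the limit, whereas the paper works along a maximizing family $\delta\mathbf{v}_h$; your explicit verification of hypothesis \eqref{eq:deltaw} from $\mu_p(F)<0$ (via $\norm{\mathbb{1}+hF}_p<1$ for small $h$) is a step the paper leaves implicit, and is a welcome tightening rather than a deviation.
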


\begin{proof} 
The matrix induced norm definition $\norm{M} = \max \limits_{\norm{v}=1} \norm{Mv}$ implies that for each positive scalar $h$, there exists $\delta \mathbf{v}_h$ with $\norm{\delta \mathbf{v}_h}_p > 0$
satisfying the following equality:
\begin{equation} \label{eq:muv}
\frac{\norm{\delta \mathbf{v}_h + h F_r \delta \mathbf{v}_h}_p - \norm{\delta \mathbf{v}_h}_p}{h \norm{\delta \mathbf{v}_h}_p} = \frac{\norm{\mathbb{1} + h F_r}_p -1}{h}.
\end{equation}
The logarithmic norm is then defined as
\begin{equation}
\mu_p(F_r) = \lim\limits_{h \rightarrow 0}
\frac{\|\delta \mathbf{v}_{h} + h F_r \delta \mathbf{v}_{h}\|_p - \|\delta \mathbf{v}_{h}\|_p}{h \norm{\delta \mathbf{v}_{h}}_p}.
\end{equation}
Lemma 1 implies that this expression can be also rewritten as 
\begin{equation} \label{eq:mpdef}
\mu_p(F_r) \leq \lim\limits_{h \rightarrow 0}
\frac{\biggl\|\delta \mathbf{w}_{h} + h \begin{bmatrix} F_r \delta \mathbf{v}_{h} \\ 0 \end{bmatrix} \biggr\|_p - \|\delta \mathbf{w}_{h} \|_p}{h \norm{\delta \mathbf{v}_{h}}_p}.
\end{equation}
On the other hand, applying the property defined by \eqref{eq:FFr} we have that
 \begin{align} \label{eq:r2e}
 & \delta \mathbf{w}_{h} + h \begin{bmatrix} F_r \delta \mathbf{v}_{h} \\0 \end{bmatrix} \nonumber\\
& = \delta \mathbf{w}_{h} + h F \delta \mathbf{w}_{h} - h\begin{bmatrix}
\theta R^T(C \delta \mathbf{x}_{h}+ D \hat{\delta \mathbf{y}}_{h}) \\ Q^T(C \delta \mathbf{x}_{h}+ D \hat{\delta \mathbf{y}}_{h}) \end{bmatrix} \nonumber\\
&=\delta \mathbf{w}_{h} + h F \delta \mathbf{w}_{h}
\end{align}
where $\delta \mathbf{x}_{h} = \theta^{-1} \delta \mathbf{v}_{h}$ and $\hat{\delta \mathbf{y}}_{h} = -D^{-1}C \delta \mathbf{x}_{h}$. 
Combining \eqref{eq:mpdef} and \eqref{eq:r2e}, we have that
\begin{align}
\mu_p(F_r)
& \leq \lim\limits_{h \rightarrow 0} \frac{\| \delta \mathbf{w}_{h} + h F \delta \mathbf{w}_{h}\|_p -\| \delta \mathbf{w}_{h}\|_p}{h \norm{\delta \mathbf{v}_{h}}_p} \nonumber\\
& = \lim\limits_{h \rightarrow 0} \frac{\| \delta \mathbf{w}_{h} + h F \delta \mathbf{w}_{h}\|_p -\| \delta \mathbf{w}_{h}\|_p}{h \norm{\delta \mathbf{w}_{h}}_p} \frac{\norm{\delta \mathbf{w}_{h}}_p}{\norm{\delta \mathbf{v}_{h}}_p}. \label{eq:pnorm}
\end{align}
By definition 
\begin{align}
\delta \mathbf{w}_h = \begin{bmatrix}\delta \mathbf{v}_{h} \\ \delta \mathbf{u}_{h}\end{bmatrix} = H \delta \mathbf{v}_{h},
\end{align}
so $\norm{\delta \mathbf{w}_h}_p \geq \nu_p(H)\norm{\delta \mathbf{v}_h}_p$, and by combining this with the assumption $\mu_p(F) < 0$, we can rewrite \eqref{eq:pnorm} as the following
\begin{align}
\mu_p(F_r)
& \leq \lim\limits_{h \rightarrow 0} \frac{\| \delta \mathbf{w}_{h} + h F \delta \mathbf{w}_{h}\|_p -\| \delta \mathbf{w}_{h}\|_p}{h \norm{\delta \mathbf{w}_{h}}_p}\nu_p(H)\nonumber\\
& \leq \mu_p(F) \nu_p(H)\label{eq:FrlFp}.
\end{align}
Q.E.D.

The below corollary of Theorem \ref{theo:p21} provides the explicit expression of $\nu_p(S)$ with $p=2$.
\begin{corollary} (\textbf{$2$ norm})  \label{eq:col1}
Assuming that all assumptions of Theorem \ref{theo:p21} are satisfied, the contraction rate associated with the reduced system can be bounded as below
\begin{equation}
\mu_2(F_r) \leq \mu_2(F) \sqrt{1 +\sigma_{\min}^2(S)}
\end{equation}
in which $\sigma_{\min}(S)$ denotes the minimal singular value of the matrix $S$.
\end{corollary}
The proof of Corollary \ref{eq:col1} follows from Theorem \ref{theo:p21} and note that for $p=2$, we have that $\nu_2(H)= \sqrt{1 + \min \limits_{\norm{v}=1} \|S v\|_2^2} = \sqrt{1+ \sigma_{\min}^2(S)}$.
\end{proof}

\subsection{Converse theorems} \label{sec:converse}

\begin{theorem} \label{theo:p22}
(\textbf{$2$ norm}) For a contracting system $\dot{\mathbf{x}} = \mathbf{f}(\mathbf{x},\mathbf{Y}(\mathbf{x}))$ and metric function $\theta(\mathbf{x},t)$ with $\mu_2(F_r) < 0$ and any $\epsilon > 0$ there exists an extended system $F$ characterized by the matrices $Q,R,\rho$ contracting with the contraction rate satisfying $\mu_2(F) \leq \mu_2(F_r)/(1+\epsilon)$.

\end{theorem}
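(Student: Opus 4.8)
The plan is to construct an explicit one-parameter family of extensions whose $2$-norm measure converges to $\mu_2(F_r)$ as the parameter tends to zero, and then select one member of the family that beats the target $\mu_2(F_r)/(1+\epsilon)$. The idea is to use the freedom in $Q,R,\rho$ to render the extended Jacobian \eqref{eq:F} block lower-triangular with a strongly damped lower block whose coupling to the upper block stays bounded. Concretely, I would set $R=0$, $\rho=\eta I$ for a small scalar $\eta>0$, and $Q=-D^{-T}$, so that the lower-right block becomes $Q^TD\rho^{-1}=-\tfrac1\eta I$ and the lower-left block becomes $Q^TC\theta^{-1}=-\bar S$ with $\bar S\triangleq D^{-1}C\theta^{-1}$. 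With these choices \eqref{eq:F} collapses to
\begin{equation}
F=\begin{bmatrix} F_r & 0 \\ -\bar S & -\tfrac1\eta I\end{bmatrix}.
\end{equation}
As a consistency check, the associated coupling matrix is $S=\rho D^{-1}C\theta^{-1}=\eta\bar S$, so $\sigma_{\min}(S)=\eta\,\sigma_{\min}(\bar S)\to 0$, and Theorem \ref{theo:p21} already yields the matching lower bound $\mu_2(F)\ge\mu_2(F_r)/\sqrt{1+\eta^2\sigma_{\min}^2(\bar S)}\to\mu_2(F_r)$; the substance of the converse is therefore the upper bound, which I would obtain directly from the structure above.

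Next I would estimate $\mu_2(F)=\lambda_{\max}\big(\tfrac12(F+F^T)\big)$ by a Schur-complement argument. Writing $\Phi=\tfrac12(F_r+F_r^T)$, so that $\Phi\preceq\mu_2(F_r)I$, the inequality $\mu_2(F)\le t$ is equivalent to
\begin{equation}
\begin{bmatrix}\Phi-tI & -\tfrac12\bar S^T \\ -\tfrac12\bar S & -(\tfrac1\eta+t)I\end{bmatrix}\preceq 0.
\end{equation}
For $\eta$ small enough that $\tfrac1\eta+t>0$ the lower-right block is negative definite, and taking its Schur complement reduces the condition to
\begin{equation}
\Phi-tI+\frac{1}{4(\tfrac1\eta+t)}\,\bar S^T\bar S\preceq 0,
\end{equation}
which, using $\Phi\preceq\mu_2(F_r)I$ and $\bar S^T\bar S\preceq\sigma_{\max}^2(\bar S)I$, holds whenever $t\ge\mu_2(F_r)+\sigma_{\max}^2(\bar S)\big/\big(4(\tfrac1\eta+t)\big)$.

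Finally I would choose $t=\mu_2(F_r)/(1+\epsilon)$. Since $\mu_2(F_r)<0$, the gap $t-\mu_2(F_r)=\epsilon\,|\mu_2(F_r)|/(1+\epsilon)$ is a fixed positive number, whereas the correction $\sigma_{\max}^2(\bar S)\big/\big(4(\tfrac1\eta+t)\big)$ tends to $0$ as $\eta\to0^+$. Hence for every sufficiently small $\eta$ the displayed inequality is satisfied, giving $\mu_2(F)\le\mu_2(F_r)/(1+\epsilon)$ and therefore an admissible extension. I expect the main obstacle to be making rigorous the singular-perturbation behaviour of the spectrum: one must confirm that the bounded off-diagonal coupling $-\tfrac12\bar S$ shifts the top $n$ eigenvalues by only a vanishing amount while the remaining $m$ eigenvalues diverge to $-\infty$, which is precisely what the Schur-complement estimate certifies. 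A secondary technical point is uniformity of the choice of $\eta$ over the contraction domain $\mathcal C_2$: because $D^{-1}$ exists along the branch, $\sigma_{\max}(\bar S)$ is bounded on compact subsets, so a single $\eta$ suffices simultaneously for all states.
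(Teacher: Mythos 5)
Your proof is correct, but it takes a genuinely different route from the paper's. The paper proves Theorem~\ref{theo:p22} by choosing $\rho$ small enough that $\sigma_{\max}^2(S)\leq\epsilon$, $Q=-\eta D^{-T}\rho^T$ and, crucially, a \emph{nonzero} $R=\eta D^{-T}\rho^T\rho D^{-1}CP^{-1}$, with $\eta=-\mu_2(F_r)/(1+\sigma_{\max}^2(S))$; this choice makes the symmetric part of $F$ block diagonal, so the quadratic form collapses to $\delta\mathbf{v}^T(F_r+\eta S^TS)\delta\mathbf{v}-\eta\|\delta\mathbf{u}\|^2\leq-\eta\|\delta\mathbf{w}\|^2$ and the bound $\mu_2(F)\leq-\eta\leq\mu_2(F_r)/(1+\epsilon)$ drops out in two lines with no limiting argument. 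You instead take $R=0$, $Q=-D^{-T}$, $\rho=\eta\mathbb{1}$, which leaves a block lower-triangular $F$ whose cross coupling $\bar S=D^{-1}C\theta^{-1}$ does \emph{not} shrink with $\eta$, and you neutralize it via the Schur complement of the strongly damped block $-(1/\eta)\mathbb{1}$; your reduction of $\mu_2(F)\leq t$ to $\Phi-t\mathbb{1}+\bar S^T\bar S/(4(1/\eta+t))\preceq0$ is valid, and the choice $t=\mu_2(F_r)/(1+\epsilon)$ with $\eta\to0^+$ closes the argument correctly. Interestingly, your triangular family is exactly the one the paper itself uses for the $1$ and $\infty$ norm converse (Theorem~\ref{theo:p1inf2}: $R=0$, $Q=\mu_\infty(F_r)\rho D^{-1}$), so your proof unifies the $2$-norm case with that construction; what you give up is non-stiffness of the extension---in your $F$ the $\delta\mathbf{u}$ dynamics contracts at rate $1/\eta\to\infty$, essentially a singular-perturbation certificate, whereas the paper's extension contracts at the moderate rate $\eta\approx|\mu_2(F_r)|$, which is closer in spirit to the auxiliary matrix $Z$ used in the LMI construction of Section~\ref{sec:conreg}. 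Your uniformity caveats (boundedness of $\sigma_{\max}(\bar S)$ on compact subsets of the domain, and a single $\eta$ given a uniform bound $\mu_2(F_r)\leq-\beta$) are at the same level of rigor as the paper's own pointwise treatment, and your consistency check against Theorem~\ref{theo:p21} is a nice touch the paper does not include.
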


\begin{proof} \label{proof:2norm2}
This theorem can be proven by explicit construction of the matrices $Q, R, \rho$, which ensures fast enough contraction of $F$. The matrix $\rho$ is chosen to be small enough, so that $\sigma_{\max}^2(S) \leq \epsilon$ and $R = \eta D^{-T} \rho^T \rho D^{-1} C  P^{-1}$, $Q = -\eta D^{-T}\rho^T$, where $\eta = -\mu_2(F_r)/(1+\sigma_{\max}^2(S) )$. This choice of $R$ and $Q$ ensures that the symmetric part of $F$ is block-diagonal, so the following inequality follows from \eqref{eq:FFr}:
\begin{align} \label{eq:npdF}
&\delta{\mathbf{w}}^T F\delta{\mathbf{w}}= \begin{bmatrix}  \theta\delta{\mathbf{x}} \\  \rho\delta{\mathbf{y}} \end{bmatrix}^T \begin{bmatrix} F_r + \eta S^T S & 0 \\
  0 & -\eta \mathbb{1}\end{bmatrix}\begin{bmatrix}  \theta\delta{\mathbf{x}} \\  \rho\delta{\mathbf{y}} \end{bmatrix}\nonumber \\
  & = \delta{\mathbf{v}}^T \left(F_r+\eta S^TS\right) \delta{\mathbf{v}} - \eta \|\delta{\mathbf{u}}\|^2 \nonumber\\
  & \leq \left(\mu_2(F_r) +  \eta \sigma_{\max}^2(S)\right)\|\delta{\mathbf{v}}\|^2- \eta \|\delta{\mathbf{u}}\|^2  \nonumber\\
  & = -\eta \|\delta{\mathbf{w}}\|^2 + \left(\eta + \mu_2(F_r) + \eta \sigma_{\max}^2(S)\right)\|\delta{\mathbf{v}}\|^2 \nonumber \\
  & = -\eta \|\delta{\mathbf{w}}\|^2.
\end{align}
Since this inequality holds true for any $\delta {\mathbf{w}}$, we conclude that $\mu_2(F) \leq -\eta = \mu_2(F_r)/(1+\epsilon)$.

\end{proof}

The counterpart of Theorem \ref{theo:p22} for $1$ norm and $\infty$ norm are presented below.

 

\begin{theorem} \label{theo:p1inf2}
(\textbf{$1$ norm and $\infty$ norm}) For a contracting system $\dot{\mathbf{x}} = \mathbf{f}(\mathbf{x},\mathbf{Y}(\mathbf{x}))$ and metric function $\theta(\mathbf{x},t)$ with $\mu_p(F_r) < 0$ and any $\epsilon > 0$ there exists an extended system $F$ characterized by the matrices $Q,R,\rho$ contracting with the contraction rate satisfying $\mu_p(F)  \leq \mu_p(F_r)(1- \epsilon)$ where $p = 1, \infty$.

 
\end{theorem}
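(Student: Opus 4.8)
The plan is to follow the explicit-construction strategy of the $2$ norm converse (Theorem \ref{theo:p22}), but adapted to the row/column structure of the $1$ and $\infty$ matrix measures in Table \ref{table:matmu}. I would again set $R=0$ and take $Q$ proportional to $D^{-T}\rho^{T}$, namely $Q^{T}=-\alpha\,\rho D^{-1}$ for a positive scalar $\alpha$ to be fixed. Substituting into \eqref{eq:F} collapses the generalized unreduced Jacobian to the block lower triangular form
\[
F=\begin{bmatrix} F_r & 0 \\ -\alpha S & -\alpha\mathbb{1}\end{bmatrix},\qquad S=\rho D^{-1}C\theta^{-1},
\]
so that the $\delta\mathbf{u}$ coordinates no longer feed back into the $\delta\mathbf{v}$ rows, while the $-\alpha\mathbb{1}$ block keeps the $\delta\mathbf{u}$ dynamics strongly contracting.

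Next I would evaluate $\mu_p(F)$ directly from the closed-form expressions in Table \ref{table:matmu}. For $p=\infty$ the measure is a maximum over rows: the first $n$ rows reproduce exactly $\mu_\infty(F_r)$ because their $\delta\mathbf{u}$ entries vanish, whereas row $k$ of the lower block contributes $-\alpha\bigl(1-\sum_j|S_{kj}|\bigr)$. For $p=1$ the measure is a maximum over columns: the last $m$ columns each contribute $-\alpha$, while column $j$ of the first block contributes the corresponding column measure of $F_r$ augmented by the coupling penalty $\alpha\sum_k|S_{kj}|$ coming from the $-\alpha S$ block. In both cases $\mu_p(F)$ thus splits into a reduced part governed by $\mu_p(F_r)$ and an auxiliary part governed by $\alpha$ and the size of $S$.

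The final step is to select $\rho$ and $\alpha$ to meet the target. Since $S=\rho D^{-1}C\theta^{-1}$ scales linearly with $\rho$, shrinking $\rho$ drives the relevant induced norm of $S$ (the maximal row sum $\max_k\sum_j|S_{kj}|$ for $p=\infty$, the maximal column sum $\max_j\sum_k|S_{kj}|$ for $p=1$) to zero. For $p=\infty$ I would then take $\alpha$ large enough that the lower rows fall below $\mu_\infty(F_r)(1-\epsilon)$; the upper rows already satisfy the bound, since $\mu_\infty(F_r)\le\mu_\infty(F_r)(1-\epsilon)<0$ when $\epsilon\in(0,1)$. For $p=1$ I would pick $\alpha$ in the window where the last columns obey $-\alpha\le\mu_1(F_r)(1-\epsilon)$ while the coupling penalty obeys $\alpha\max_j\sum_k|S_{kj}|\le-\epsilon\,\mu_1(F_r)$; these are simultaneously feasible precisely because $\rho$, and hence $S$, has been made small. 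Either way one concludes $\mu_p(F)\le\mu_p(F_r)(1-\epsilon)$.

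I expect the main obstacle to be exactly this coupling penalty, which distinguishes the $1/\infty$ case from the $2$ norm case and is what forces the factor $(1-\epsilon)$. In the $2$ norm proof the symmetric choice of $Q,R$ makes the symmetric part of $F$ block diagonal, annihilating all cross terms; but the $1$ and $\infty$ measures are not orthogonally invariant, so the off-diagonal block $-\alpha S$ unavoidably leaks into a row sum (for $\infty$) or a column sum (for $1$) and cannot be removed by any choice of $Q,R$. The resolution is to trade this leakage against the contraction budget: because the penalty is proportional to $\alpha\|S\|$ and $\|S\|\to0$ as $\rho\to0$, the lower bound on $\alpha$ needed to over-contract the $\delta\mathbf{u}$ block can be met while the penalty stays below the $\epsilon$ margin. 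Verifying that these two requirements on $\alpha$ are compatible for sufficiently small $\rho$ is the crux of the argument.
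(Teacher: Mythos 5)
Your proposal is correct and follows essentially the same route as the paper: set $R=0$, take $Q^{T}=-\alpha\,\rho D^{-1}$ (the paper fixes $\alpha=-\mu_p(F_r)$, i.e.\ $Q=\mu_\infty(F_r)\rho D^{-1}$), shrink $\rho$ so the relevant induced norm of $S$ is at most $\epsilon$, and read off $\mu_p$ of the resulting block lower triangular $F$ from the row/column-sum formulas in Table \ref{table:matmu}. Your only additions are keeping the gain $\alpha$ as a free parameter with an explicit feasibility window and spelling out the $1$ norm column computation, which the paper dismisses with ``can be considered in the same way.''
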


\begin{proof}

Similar to proof \ref{proof:2norm2} we need to construct an appropriate tuple of matrices $Q$, $R$, $\rho$. Below we only present $\infty$ norm, but $1$ norm can be considered in the same way. Choosing $R = 0$, $Q = \mu_\infty(F_r) \rho D^{-1}$, and metric $\rho$ small enough so that $\norm{S}_\infty \leq \epsilon$, leads to a diagonally dominant matrix $F$ below:
\begin{align}
F = \begin{bmatrix}
F_r & 0\\
\mu_\infty(F_r) S & \mu_\infty(F_r) \mathbb{1} 
\end{bmatrix},
\end{align}
then we have the following relation:
\begin{align}
\mu_\infty(F) &=\max\{\mu_\infty(F_r), \nonumber\\
& \qquad \max \limits_{n+1 \leq i \leq n+m} \{\mu_\infty(F_r)+ \sum_{j=1}^{n}| \mu_\infty(F_r) S_{ij}| \} \}  \nonumber\\
&\leq \max \{ \mu_\infty(F_r), \mu_\infty(F_r) + |\mu_\infty(F_r)| \norm{S}_{\infty}\} \nonumber\\
&= \mu_\infty(F_r)(1-\epsilon).
\end{align}
Q.E.D.

\end{proof}


\subsection{Relation to other works} \label{sec:relate}
In this section we first discuss the relation to linear stability of DAE systems. The DAE systems have been studied extensively under ``descriptor" forms \cite{Lewis1985,TAKABA1995, bond2009stable, wu1994stability,MASUBUCHI1997669} as well as singular systems in \cite{verghese1981generalized}. In fact if there exists a matrix $Z$ that satisfies the Lyapunov inequality \eqref{eq:ZJ} in section \ref{sec:conreg} with $J_\star$ at an equilibrium $\mathbf{z}_\star$ and $\beta = 0$, then the descriptor system is asymptotically stable. Theorem \ref{theo:p22} not only suggests that the existence of such matrix $Z$ is indeed the sufficient condition for linear stability, but also provides an explicit construction of the certificate. The relation between the two notions of contraction and linear stability is further discussed below.

Contraction analysis and linear stability are closely related for autonomous systems. As discussed in section \ref{sec:invcons} if there exists a stable equilibrium that lies within a ball-like invariant region inscribed in the contraction region, all the trajectories of the systems starting inside the ball will shrink towards each other and merge to the nominal trajectory associated with the equilibrium; hence, the system is linearly stable at such equilibrium. In other words, if the system is linear stable at a particular equilibrium, there exists a contraction region centered at the equilibrium. This is true for ODEs as observed in \cite{pablo}. The converse theorems \ref{theo:p22} and \ref{theo:p1inf2} provide an explicit construction for DAEs. 

From the contraction and linear stability comparability, the inner approximated contraction region constructed below in section \ref{sec:conreg} is indeed a robust linear stability region in the variable space associated with the nominal operating point. Speaking of robust linear stability region, any equilibrium, if it exists and lies in such region, is a linearly stable one. Moreover as motivated by applying contraction analysis to the power systems which can be represented in DAE form, incremental stability implies convergence and vice versa. For the distinctions between the two concepts, interested readers can refer to \cite{ruffer2013convergent, angeli2002lyapunov}.

Singularly perturbed systems are also related to DAE systems as the time constant $\epsilon \rightarrow 0$. \cite{del2013contraction, bousquet2015contraction} revisit some key results of singular perturbations using contraction tools, where the fast and slow sub-systems are assumed to be partially contracting. Our approach here, on the other hand, doesn't require the systems to be partially contracting in the algebraic variable $\mathbf{y}$. In comparison to the key theorems from \cite{del2013contraction, bousquet2015contraction}, our results provide explicit conditions on the Jacobian matrices that can be applied to any system. However, to our knowledge, neither of our previously reported results on contraction of singularly perturbed systems dominate each other.

With respect to the contraction condition, the condition introduced in \cite{del2011contraction} can be recovered under our framework when $\epsilon$ goes to $0$, as follows. Consider the standard singular perturbation system:
\begin{align} \label{eq:singper}
    \dot{x}=f(x,y,t), \, \epsilon \dot{y} = g(x,y,t),\,\epsilon \geq 0.
\end{align}

Assume that the system \eqref{eq:singper} is partially contracting in $x$ and in $y$ with respect to transformation metrics $\theta$ and $\rho$. For simplicity let the transformation metrics be constant. Let's consider a Lyapunov function $V = \norm {[ \theta  \delta{\mathbf{x}} \, \rho \delta \mathbf{y} ]^T}$ then the system \eqref{eq:singper} is contracting if the generalized Jacobian $F_{sing} = \begin{bmatrix} \theta A \theta^{-1}& \theta B \rho^{-1}\\\rho C \theta^{-1}/\epsilon& \rho D \rho^{-1}/\epsilon \end{bmatrix}$ has a uniform negative matrix measure. Therefore if we select $Q = \rho^{-T} /\epsilon$, $R = D^{-T} B^T$, then $F_{sing} = F$. This implies that the central theorem \ref{theo:p21} applies to the DAE system associated to the system \eqref{eq:singper}.

In \cite{lohmiller1998contraction}, a similar class of systems was analyzed, where the linear constraints were imposed on the differential systems. The key conclusion that contraction of the original unconstrained flow implies local contraction
of the constrained flow is consistent with our results. However, apart from being constructive, our results also don't directly follow from the observations made in \cite{lohmiller1998contraction} as in our case the contraction of the extended system is not restricted to the algebraic manifold. 

\section{Inner approximation of contraction region} \label{sec:conreg}

In this section we constraint ourself to the class of DAE systems which can be represented by quadratic equations in variables $\mathbf{z}$. As a result the Jacobian $J$ depends linearly on $\mathbf{z}$. As identifying the real contraction region is challenging and even impossible in many practical situations, we introduce a technique using LMI formulation for constructing an inner approximation of contraction region centered at a given equilibrium based on $2$ norm. The approximated region has its merits of determining transient stability in electrical systems as shown in the application section. 


\begin{proposition} \label{prop:F}
The DAE system \eqref{eq:dif} and \eqref{eq:alg} is contracting in the contraction region $\mathcal{C}_p$ if there exists transform $\theta(\mathbf{x},t)$ such that $\exists \beta >0$, $\forall \mathbf{x} \in \mathcal{C}_p$, $\mu_p(F) \leq -\beta$.
\end{proposition}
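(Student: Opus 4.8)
The plan is to obtain this proposition as a direct corollary of Theorem \ref{theo:p21} together with Proposition \ref{prop:1}, using the fact that the augmentation matrix $H$ never contracts a vector in the $p$-norm. First I would note that the hypothesis $\mu_p(F) \leq -\beta < 0$ is exactly the condition $\mu_p(F) < 0$ required to invoke Theorem \ref{theo:p21}. Hence, for every $\mathbf{x} \in \mathcal{C}_p$, the forward inequality $\mu_p(F_r) \leq \mu_p(F)\,\nu_p(H)$ holds, with $H = \begin{bmatrix} \mathbb{1} \\ S \end{bmatrix}$ and $S = \rho D^{-1} C \theta^{-1}$.

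The crucial intermediate step is to establish that $\nu_p(H) \geq 1$ for every $p \geq 1$. Because $H$ carries the identity block on top, any $v$ with $\|v\|_p = 1$ maps to $Hv = \begin{bmatrix} v \\ Sv \end{bmatrix}$, so for finite $p$ one has $\|Hv\|_p^p = \|v\|_p^p + \|Sv\|_p^p \geq \|v\|_p^p = 1$, while for $p = \infty$ one has $\|Hv\|_\infty = \max(\|v\|_\infty, \|Sv\|_\infty) \geq \|v\|_\infty = 1$. Taking the minimum over unit vectors $v$ in the definition $\nu_p(H) = \min_{\|v\|_p = 1}\|Hv\|_p$ then yields $\nu_p(H) \geq 1$.

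Combining these observations, and keeping careful track of signs, since $\mu_p(F) < 0$ and $\nu_p(H) \geq 1$ I would conclude $\mu_p(F)\,\nu_p(H) \leq \mu_p(F) \leq -\beta$, and therefore $\mu_p(F_r) \leq -\beta$ uniformly on $\mathcal{C}_p$. By Proposition \ref{prop:1}, this is precisely the condition guaranteeing that the DAE system \eqref{eq:dif}, \eqref{eq:alg} is contracting with respect to the metric $\theta$ in $\mathcal{C}_p$, which completes the argument.

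I expect no substantive obstacle: the statement is essentially a repackaging of the forward theorem into a form convenient for the LMI construction of Section \ref{sec:conreg}, trading the reduced Jacobian $F_r$ (which carries the awkward inverse $D^{-1}$) for the inversion-free unreduced Jacobian $F$. The only point demanding genuine care is the sign bookkeeping, namely verifying that multiplying the \emph{negative} quantity $\mu_p(F)$ by the factor $\nu_p(H) \geq 1$ strengthens rather than weakens the bound; beyond that, the bound $\nu_p(H) \geq 1$ is the lone ingredient not already supplied by the earlier results.
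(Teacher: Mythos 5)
Your proposal is correct and follows essentially the same route as the paper: invoke the forward result (Theorem \ref{theo:p21}) under the hypothesis $\mu_p(F) \leq -\beta < 0$ to bound $\mu_p(F_r)$, then conclude contraction of the DAE via Proposition \ref{prop:1}. If anything, your explicit intermediate bound $\nu_p(H) \geq 1$ (from the identity block of $H$) is more careful than the paper's two-line proof, which only asserts that $\mu_p(F_r)$ is negative, whereas your sign bookkeeping delivers the uniform rate $\mu_p(F_r) \leq \mu_p(F)\,\nu_p(H) \leq -\beta$ that Proposition \ref{prop:1} actually requires.
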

\begin{proof}
$\mu_p(F_r)$ is negative follows from Theorem \ref{theo:p21}. The definition of contraction region in \eqref{eq:Fr} then concludes the proof.
\end{proof}
An important application of the Proposition \ref{prop:F} is to construct the contraction region by analyzing the extended systems where LMI formulation can be used for $2$ norm. Since the generalized unreduced Jacobian matrix introduced in \eqref{eq:F} is not suitable for LMI formulation, we rewrite the Jacobian in more convenient form as below with a constant metric $\theta$, $R = \tilde{R} + D^{-T} B^T$ and $Q = \tilde{Q}$, and $\rho = \mathbb{1}$ so that the system contracts in $\mathbf{y}$ with respect to the identity metric:
\begin{align}
\delta{\mathbf{w}}^T F\delta{\mathbf{w}} 
&= \begin{bmatrix}  \delta{\mathbf{x}} \\  \delta{\mathbf{y}} \end{bmatrix}^T \begin{bmatrix} P A + \tilde{R}^T & \tilde{R}^T D + P B  \\ \tilde{Q}^T C & \tilde{Q}^T D  \end{bmatrix}\begin{bmatrix}  \delta{\mathbf{x}} \\ \delta{\mathbf{y}} \end{bmatrix}\nonumber \\
 &= \delta{\mathbf{z}}^T Z^T J(\mathbf{z}) \delta{\mathbf{z}},
\end{align}
with a lower block diagonal auxiliary matrix $Z = \begin{bmatrix}
P & 0\\
\tilde{R} & \tilde{Q}
\end{bmatrix}$. Such matrix $Z$ also is used in linear stability assessment for descriptor systems. This is discussed in details in section \ref{sec:relate}. With the new representation, the problem of solving $\mu_2(F) \leq -\beta$ reduces to the following Lyapunov bilinear inequality in $Z$ and $J(\mathbf{z})$:
\begin{align} \label{eq:ZJ}
Z^T J(\mathbf{z})+ J(\mathbf{z})^T Z \preceq -\beta \mathbb{1}.
\end{align}
For any fixed $Z$ the equation \eqref{eq:ZJ} defines a spectrahedral region where the system is provably contracting. The invariant region around equilibrium point could have been constructed by inscribing the ball (ellipsoid) associated with the metric $\theta, \rho$ inside this region. However, inscription of ellipsoids inside spectrahedra is a NP-hard problem not scalable to the large power systems. Instead, we propose an alternative procedure, where we construct an intermediate polytopic region inscribed in a spectrahedron in which we inscribe the contracting ball. 

Due to the bilinear nature of \eqref{eq:ZJ} a contraction region can be characterized iteratively, for example through the following $2$-step procedure. First for a fixed point, say the equilibrium $\mathbf{z}_\star$ which without loss of generality can be assumed zero, one sets $J(\mathbf{z}) = J_\star$ then solves \eqref{eq:ZJ} for $Z$. Let $Z_\star$ be the solution of the first step, then the metric $\theta$ can be determined as the Cholesky decomposition of $P$. Next we fix $Z = Z_\star$ and perturb the system around its equilibrium $\mathbf{z}_\star$. The perturbation results in a linear Jacobian matrix $J(\mathbf{z}) = J_\star + \sum_k z_k J_k$ where some $J_k$ may vanish, which implies the fact that not all states and variables contribute to the Jacobian matrix. As long as the inequality \eqref{eq:ZJ} is satisfied, the maximum admissible variations $\mathbf{z}$ quantify the inner approximation of the contraction region. Expressly the goal of the second step is to identify a region in the variable space in which $Z_\star$ is the common matrix satisfying \eqref{eq:ZJ} for all inner points. This is equivalent to finding variations $\mathbf{z}$ satisfying the following LMI:
\begin{align} \label{eq:step2}
J(\mathbf{z})^T Z_\star + Z_\star^T J(\mathbf{z}) + \beta \mathbb{1} \preceq 0.
\end{align}
Note that \eqref{eq:step2} holds at the equilibrium. This leads to the following:
\begin{align}
Z_\star^T J_\star+ J_\star^T Z_\star + \beta \mathbb{1} = -U^T U \preceq 0.
\end{align}
Moreover for non-singular $U$ one can symmetrize \eqref{eq:step2} by multiplying on the left and the right by $U^{-T}$ and $U^{-1}$, respectively. As a result we have that
\begin{align} \label{eq:step2p}
-1 + \sum_k U^{-T}z_k(Z_\star^T J_k+ J_k^T Z_\star )U^{-1} \preceq 0.
\end{align}
For each coefficient matrix $J_k$, using SVD decomposition, yields the following: 
\begin{equation}
U^{-T}(Z_\star^T J_k+ J_k^T Z_\star )U^{-1}= \sum_h \lambda_{kh} e_{kh} e_{kh}^T.
\end{equation}
Then it's sufficient to conclude that \eqref{eq:step2p} holds if the following condition satisfies:
\begin{equation} \label{eq:newcriterion}
    \sigma_{max}\left(\sum_kz_k \sum_h \lambda_{kh} e_{kh} e_{kh}^T\right) \leq 1,
\end{equation}
which then can be formulated as the following:
\begin{align} \label{eq:opt}
& \max\limits_{\norm{v}_2 =1} \sum_k z_k \sum_h\lambda_{kh} (v^T e_{kh})^2  \nonumber\\
&\leq \max\limits_{k,h}\left[|z_k \lambda_{kh}| \sigma_{max}\left(\sum_{l,h} e_{lh} e_{lh}^T\right)\right]  \nonumber\\
&\leq 1.
\end{align}

A box-type bound of the variation $z_k$ can be estimated as
\begin{equation} \label{eq:expbound}
    \max|z_k| \leq \frac{1}{(\max_h |\lambda_{kh}|) \sigma_{max}(\sum_{l,h} e_{lh}e_{lh}^T)},
\end{equation}
which defines the bounds on each variable variation $z_k$ thus identifying the inner approximated contraction region. It can be seen that the explicit bound defined by \eqref{eq:expbound} depends on the uniform upper-bound of the contraction rate $\beta$ and matrix $Z_\star$. The bounds become more conservative for a large value of $\beta$. Since there is an infinite number of choices of $Z_\star$ satisfying $\mu_2(Z_\star ^T J_\star) \leq -\beta$, it's essential to understand which $Z_\star$ would correspond to the least conservative bounds. The bounds obtained from \eqref{eq:expbound} can be also improved by deploying a better estimated upper bound in \eqref{eq:opt}.

A similar inner approximated contraction region can be also constructed based on $1$ norm and $\infty$ norm. As many practical engineering systems require all physical quantities to be in compliance with specified operational constraints, we propose to use box constraint construction. We shall inscribe a box inside the spectrahedron by allowing some variability to the coordinates
\begin{equation}\label{eq:box}
    \underline{z_k} \leq z_k \leq \overline{z_k},
\end{equation}
in all the directions $k = 1,\dots n+m$.
To certify the box region indeed lies in the contraction region we formulate a robust linear programming showing that for all $\mathbf{z}$ satisfying \eqref{eq:box}, we have that $\mu_p(Z_\star^T J(\mathbf{z})) \leq -\beta$ where $Z_\star$ satisfies $\mu_p(Z_\star^T J_\star) \leq -\beta$. Moreover one can carry on the same procedure to check whether a ball-like region in the transformed space is inscribed in the contraction region.

\subsection{Invariant set construction} \label{sec:invcons}
In this section we describe the procedure for constructing an invariant set $\mathcal{I}_{2}$ that lies in the contraction region $\mathcal{C}_{2}$.

Assume that an inner approximation of the contraction region, $\mathcal{C}_{2}$, constructed from section \ref{sec:conreg} is a convex region defined by a set of linear inequalities $e_i^T \mathbf{x} \leq b_i$, for $i = 1,\dots,2 (n+m)$, and the equilibrium $\mathbf{x_\star} = 0$. $e_i$ is a unit vector with the non-zero element either $+1$ or $-1$ due to the box-type constraints. $b_i > 0$ represents the bound on the variation along each direction, and $b_i$ is set to infinity if the corresponding $G_k$ vanishes. The linear transformation with metric $\theta$ prompts a corresponding contraction region in $\mathbf{v}$ space, i.e. $\mathcal{C}_{2\,\mathbf{v}} = \{\mathbf{v}| e_i^T \theta^{-1} \mathbf{v} \leq b_i \}$, for $i = 1,\dots,2 (n+m)$. Then to construct an invariant set we find the largest Euclidean ball centered at the equilibrium where $\mathbf{v}_\star = \theta \mathbf{x}_\star = 0$ that lies in $\mathcal{C}_{2\,\mathbf{v}}$. The problem can be formulated as the following LP:
\begin{align} \label{eq:invset}
&\underset{r}{\text{maximize}} \quad r \nonumber \\
& \text{subject to} \quad q_i(r) \leq b_i; \quad r \geq 0 \quad i = 1,\dots,2(n+m)
\end{align}
where the constraints be 
\begin{align} \label{eq:qi}
q_i(r) 
&= \sup\limits_{\norm{u} \leq 1} e_i^T \theta^{-1}(v_\star + r u) \nonumber \\
&= r \norm{e_i^T \theta^{-1}}_2.
\end{align}
\eqref{eq:qi} follows from the Cauchy-Schwarz inequality, i.e. for a nonzero vector $x$, the vector $u$ satisfying $\|u\|_2 \leq 1$ that maximizes $x^Tu$ is $x/\|x\|_2$. It also can be seen that the LP \eqref{eq:invset} admits the optimal solution $r_{max} = \min\limits_{i} \{ b_i / \norm{e_i^T \theta^{-1}}_2\}.$


\section{Transient stability of power systems}

In this section we demonstrate how the developed techniques can be applied to the problem of constructing inner approximations of the contraction regions applicable to transient stability analysis of power systems modeled in DAE forms.
\subsection{Large-disturbance stability}

Large disturbance stability or transient stability is defined as the ability of the system to maintain synchronism after being subject to major disturbances such as line failures or loss of large generators or loads. Unstable systems will exhibit large angle separation or voltage depression which lead to system disintegration \cite{kundur2004definition}. The objective of transient analysis is to determine whether the system can converge to a feasible post-fault stable equilibrium for a given pre-fault stable operating point and a trajectory along which the system evolves during the fault, the so-called fault-on trajectory. Assuming that all operational constraints or feasibility conditions, and stability conditions are satisfied at the post-fault equilibrium, we then go into the convergence of the post-fault trajectory.

There are two main approaches to transient stability analysis including time-domain simulations and energy based or direct methods \cite{ChiangBook}. An alternative based on inner approximated contraction region is then proposed. This doesn't require intensive computation efforts like the time-domain approach while still providing a reasonably non-conservative stability region in the state space. As long as the initial point of the post-fault trajectory lies inside such region, the convergence to post-fault stable equilibrium is guaranteed.

The salient features of the contraction approach include scalibility, online analysis facilitation, and it does not require tailored energy function construction. The heaviest computational tasks are solving Lyapunov inequalities and SVD decomposition, which even of large scale problems, are ready to be solved with existing algorithms in regular processors. The contraction approach also allows the analysis be free from post-fault trajectory numerical integration which is time consuming and prevents online assessment. The third feature makes a key distinction between the contraction approach and the direct methods. Indeed the direct methods rely on energy function construction which doesn't have a general form in lossy networks and there is a need for finding critical energy levels based on which a stability region is identified. The contraction approach, on the other hand, just requires the transform $\theta$ and $\rho$ under which the system is contracting. Once the transform is found through solving Lyapunov inequalities a corresponding sub-region of the contraction region can be constructed.

It's worth mentioning that the inner approximated contraction region is also a robust linear stability region so that the post-fault equilibrium is stable if it is an interior point. By construction the feasibility of the constructed region is easily validated as well. More importantly based on the inner approximated region one can either gain insight about the system stability ``degree'' or preliminarily compute ``sufficient'' critical clearing time ($sCCT$) which is more strict than the actual $CCT$, the maximum allowed fault-on duration. Hence if the fault is cleared before $sCCT$, the fault-on trajectory won't escape or exit the invariant region and the post-fault trajectory will converge to a post-fault stable equilibrium inside the invariant region. For more details on $CCT$ one can refer to \cite{ChiangBook,Kundur}.
\subsection{A $2$-bus system}
The applications of approximating the contraction region are discussed above. In this section we illustrate the procedure by constructing one for a two-bus system as shown in Figure \ref{fig:rudi} \cite{Venkatasubramanian1992}.
\begin{figure}[!ht]
    \centering
    \includegraphics[width=0.8\columnwidth]{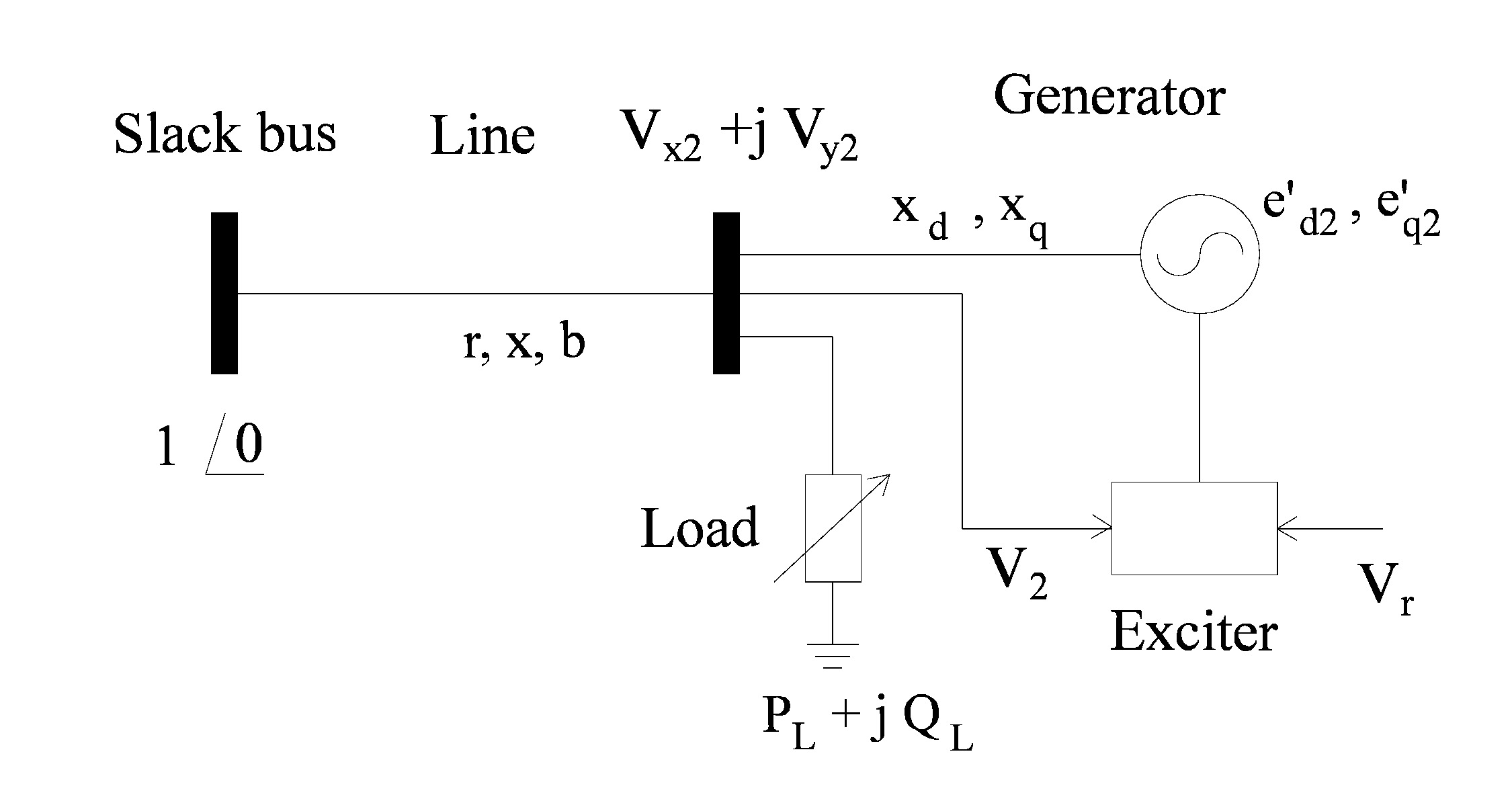}
	\caption{A $2$-bus system}
    \label{fig:rudi}
\end{figure}
The $2$-bus system includes one slack bus, and one generator bus with a load residing at the same bus. The slack bus voltage is specified, i.e. $V_1 = 1.04\angle 0$. The generator, modeled with a high order generator model, maintains the voltage at bus $2$ and generates active power at specific levels, i.e. $V_2 = 1.025\,p.u.$ and $P_G = 0.8\,p.u.$. The load consumes fixed amount of powers, $P_L = 1.63\,p.u.$ and $Q_L=1.025\,p.u.$. Note that hereafter we use $r$ to denote the line resistance.

The sets of differential equations $\dot{\mathbf{x}} = f(\mathbf{x},\mathbf{y})$ which describe the dynamics of the generator are listed below. The details are introduced in \cite{PSAT}.
\begin{align}
T'_{d0} \frac{d}{dt}{e}'_{q2} &= -e'_{q2} - (x_d - x'_d-\frac{T''_{d0}}{T'_{d0}}\frac{x''_{d}}{x'_d}(x_d-x'_d)) i_{d2} \nonumber\\
&\quad +(1-\frac{T_{AA}}{T'_{d0}}V_2), \nonumber\\
T''_{d0} \frac{d}{dt}{e}''_{q2} &= - e''_{q2} + e'_{q2} - (x'_d - x''_d-\frac{T''_{d0}}{T'_{d0}}\frac{x''_{d}}{x'_d}(x_d-x'_d)) i_{d2} \nonumber\\
& \quad + \frac{T_{AA}}{T'_{d0}}V_2,\nonumber\\
T'_{q0} \frac{d}{dt}{e}'_{d2} &= -e'_{d2} + (x_q - x'_q-\frac{T''_{q0}}{T'_{q0}}\frac{x''_{q}}{x'_q}(x_q-x'_q)) i_{q2},\nonumber\\
T''_{q0} \frac{d}{dt}{e}''_{d2} &= -e''_{d2} + e'_{d2} + (x'_q - x''_q-\frac{T''_{q0}}{T'_{q0}}\frac{x''_{q}}{x'_q}(x_q-x'_q)) i_{q2},\nonumber
\end{align}
\begin{align}
\frac{d}{dt}{\sin\delta'_2} &= 2\pi f_n \cos\delta'_2(-1 + \omega_2),\nonumber\\
M \frac{d}{dt}{\omega}_2 &= p_m - i_{d2} v_{d2} - i_{q2} v_{q2} - D(-1 + \omega_2).
\end{align}

Algebraic equations, $g(\mathbf{x},\mathbf{y}) = 0$, are composed of the relations describing the generator, the network, and the load, that can be stated as follow:
\begin{align}
0 &= -e''_{q2} + x''_{d2} i_{d2}+ v_{q2}, \nonumber\\
0 &= -e''_{d2} - x''_{q2} i_{q2}+ v_{d2}, \nonumber\\
0 &= -v_{q2} + \cos\delta'_2 v_{x2} + \sin\delta'_2 v_{y2},\nonumber\\
0 &= -v_{d2} + \sin\delta'_2 v_{x2} - \cos\delta'_2 v_{y2},\nonumber\\
0 &= (\cos\delta'_2)^2 + (\sin\delta'_2)^2 -1,\nonumber\\
0 &= \cos\delta'_2 i_{q2} + i_{d2} \sin\delta'_2 + \frac{b v_{y2}}{2} + \frac{r V_1}{r^2+x^2} \nonumber\\
& \quad-\frac{x v_{y2}}{r^2+x^2} -P_L v_{x2}-Q_L v_{y2},\nonumber\\
0 &= -\cos\delta'_2 i_{d2} + i_{q2} \sin\delta'_2 -\frac{r v_{y2}}{r^2+x^2}\nonumber\\
& \quad -\frac{r V_1}{r^2+x^2} + Q_L v_{x2}-P_L v_{y2},\nonumber\\
0 &= v_2^2 - v_{x2}^2 - v_{y2}^2.
\end{align}

For the $2$-bus system, the set of variables includes $6$ states, $\mathbf{x} = [E_{q2}', E_{q2}'', E_{d2}', E_{d2}'',\sin(\delta_2'), \omega_2]^T$, and $8$ algebraic variables, $\mathbf{y}= [i_{d2}, i_{q2}, V_{d2}, V_{q2}, V_2, V_{x2}, V_{y2},\cos(\delta_2')]^T$, where the subscript $2$ indicates bus number $2$.
The system parameters are given as the following: $T'_{d0}=0.6$, $T''_{d0}=0.02$, $x_q = 0.8958$, $x'_q = 0.1969$, $x''_q = 0.1$, $T'_{q0} = 0.535$, $T''_{q0} = 0.02$, $M = 12.8$, $D=20$, $T_{AA} = 0.002$, $p_m = P_G$, $r = 0.01938$, $x = 0.05$, $b = 0.0528$, $f_n = 60$. All parameters are in $p.u.$ except time constants in seconds and frequency in Hertz.

A dynamic simulation and analysis package is developed in Mathematica $10.3.0.0$ taking PSAT dynamic models \cite{PSAT} as the input. We also use CVX in MATLAB for solving Lyapunov inequalities.

\begin{figure}[!ht]
    \centering
    \includegraphics[width=0.8 \columnwidth]{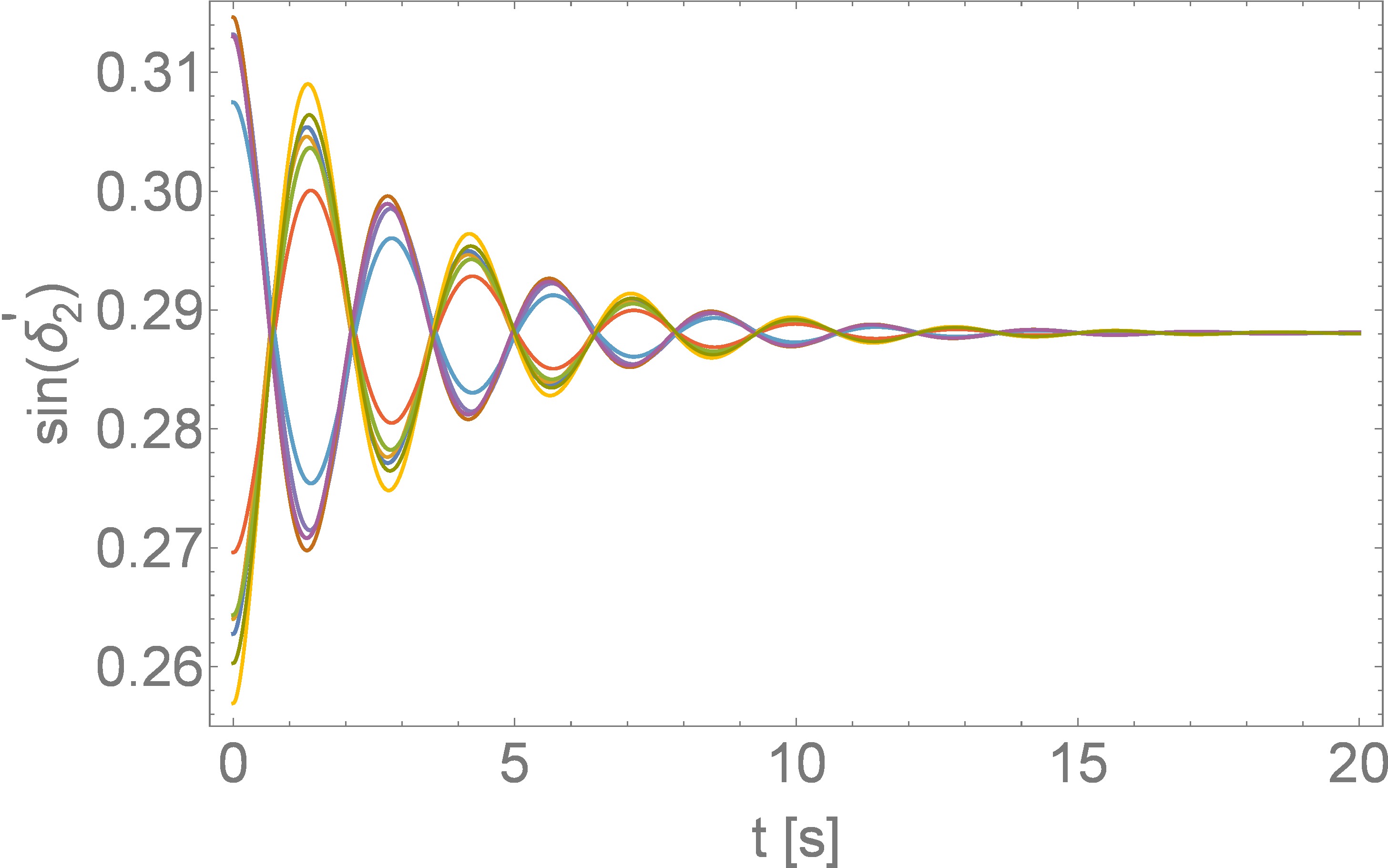}
	\caption{The state $\sin{(\delta_2')}$ of the generator simulated to $20\,s$}
     \label{fig:sindel}
\end{figure}
As shown in Figure \ref{fig:sindel}, the system will converge to the nominal equilibrium if the gaps between the initial values of states, i.e. $\sin{\delta'_2}$ in this case, and the nominal values do not exceed the maximum distance that corresponds to the maximum radius $r_{max}$ and the metric $\theta$ as discussed in the invariant set construction in section \ref{sec:invcons}.

Figure \ref{fig:largestball} shows the contraction region in state space which is an ellipsoidal region corresponding to the ball-like invariant set as discussed in section \ref{sec:conreg}. The convergence of all inner trajectories confirms that if the system starts from inside the ball, the corresponding trajectory is contained at all times. This can be interpreted as the following: if the post-fault equilibrium and the initial point of the post-fault trajectory both are a part of the region inside the ball, the system is transient stable. It also can be seen that the constructed invariant region touches the approximated contraction region boundary which associates with the state $\sin{\delta'_2}$. By assigning non-uniform weights to variables $\mathbf{z}$ in \eqref{eq:opt}, the invariant region can be stretched along other directions as well.

\begin{figure}[!ht]
    \centering
    \includegraphics[width=1\columnwidth]{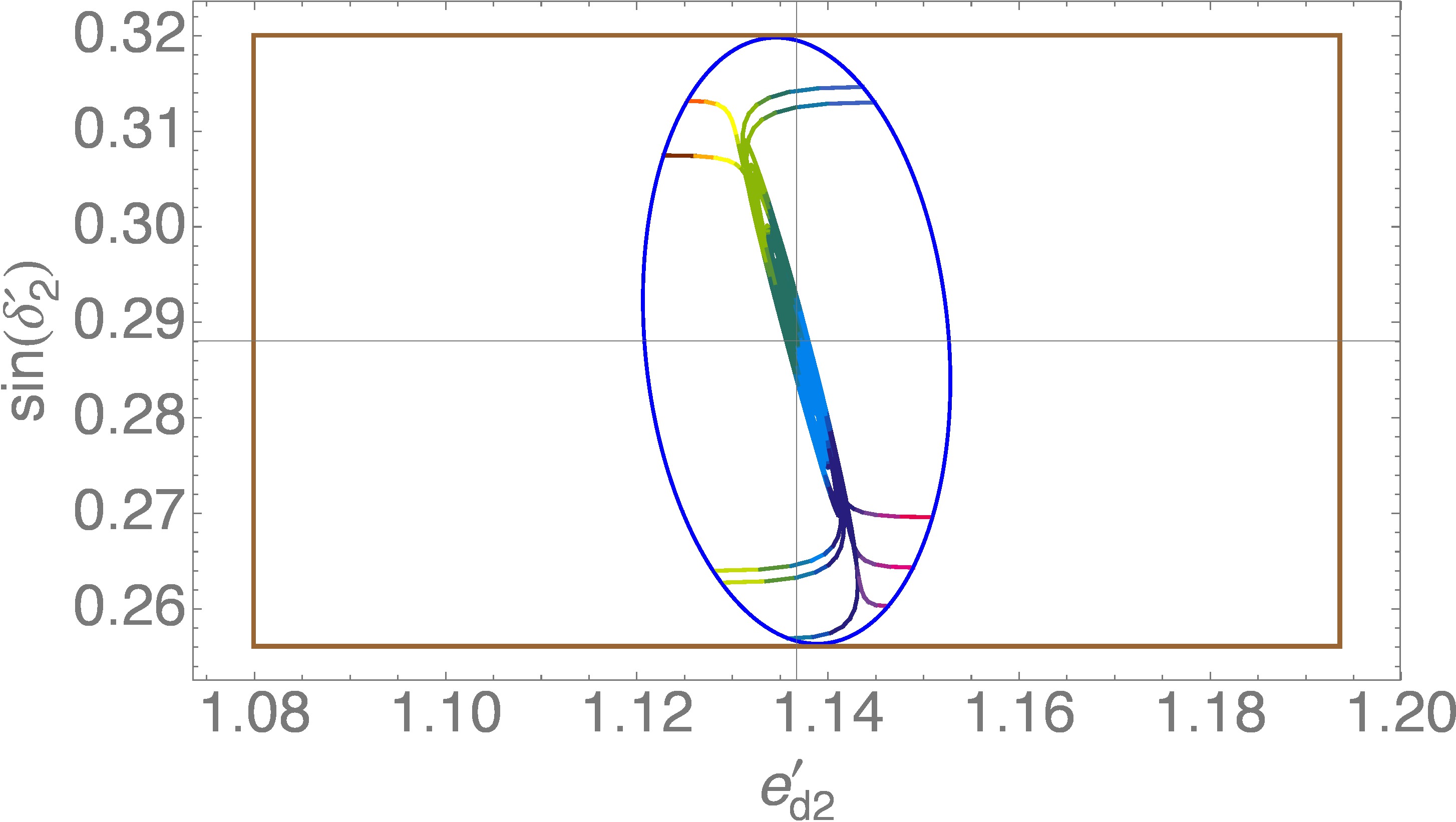}
	\caption{The ellipsoidal invariant region
    }
     \label{fig:largestball}
\end{figure}

\section{Conclusion}
In this work the contraction properties of Differential-Algebraic Systems were characterized in terms of the contracting properties of the extended Jacobian representing the virtual differential system that reduces to a given DAE under singular perturbation theory analysis. We established the relations between the contraction rates of the extended ODE and reduced DAE systems and used these relations to develop a systematic technique for constructing inner approximations of the attraction region for quadratic DAE systems. 


In the future we plan to extend our results to develop a more accurate characterization of the contraction of systems with strong time-scale separation and explore how the framework can be used for systematic decomposition of complex and large scale systems for distributed control/analysis purposes.

\section*{Acknowledgment}
This work was supported in part by NSF-ECCS award ID 1508666, MIT/Skoltech and Masdar Initiative, Ministry of Education and Science of Russian Federation, Grant agreement No. 14.615.21.0001, Grant identification code: RFMEFI61514X0001, as well as the Vietnam Education Foundation.

\appendices

\bibliographystyle{IEEEtran}

\bibliography{main.bbl}

%

\begin{IEEEbiography} [{\includegraphics[width=1in,height=1.25in,clip,keepaspectratio]{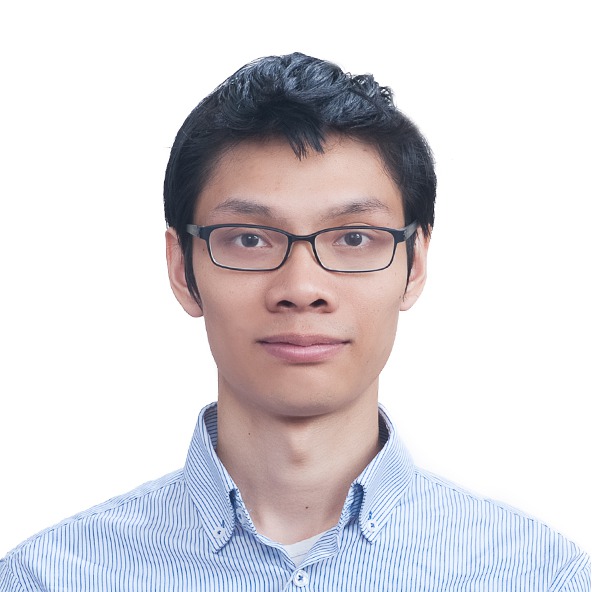}}]{Hung D. Nguyen}
(S`12) was born in Vietnam, in 1986. He received the B.E. degree in electrical engineering from Hanoi University of Technology, Vietnam, in 2009, and the M.S. degree in electrical engineering from Seoul National University, Korea, in 2013. He is a Ph.D. candidate in the Department of Mechanical Engineering at Massachusetts Institute of Technology (MIT). His current research interests include power system operation and control; the nonlinearity, dynamics and stability of large scale power systems; DSA/EMS and smart grids.
\end{IEEEbiography}
\vspace{-20 mm}
\begin{IEEEbiography} [{\includegraphics[width=1in,height=1.25in,clip,keepaspectratio]{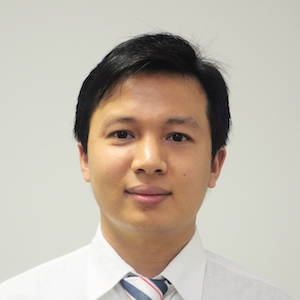}}]{Thanh Long Vu} (M`13) received the B.Eng. degree in automatic
control from Hanoi University of Technology in 2007 and the Ph.D. degree in electrical engineering from National University of Singapore in 2013.
Currently, he is a Research Scientist at the Mechanical Engineering Department of Massachusetts Institute of Technology (MIT). Before joining MIT, he was a Research Fellow at Nanyang Technological University, Singapore. His main research interests lie at the intersections of electrical power systems, systems theory, and optimization. He is currently
interested in exploring robust and computationally tractable approaches for
risk assessment, control, management, and design of large-scale complex
systems with emphasis on next-generation power grids
\end{IEEEbiography}
\vspace{-20 mm}
\begin{IEEEbiography} [{\includegraphics[width=1in,height=1.25in,clip,keepaspectratio]{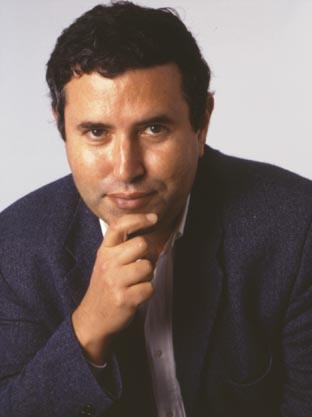}}]{Jean-Jacques Slotine}
(M`83) was born in Paris in 1959, and received his Ph.D. from the Massachusetts Institute of Technology in 1983. After working at Bell Labs in the computer research department, in 1984 he joined the faculty at MIT, where he is now Professor of Mechanical Engineering and Information Sciences, Professor of Brain and Cognitive Sciences, and Director of the Nonlinear Systems Laboratory. He is the co-author of the textbooks “Robot Analysis and Control” (Wiley, 1986) and “Applied Nonlinear Control” (Prentice-Hall, 1991). Prof. Slotine was a member of the French National Science Council from 1997 to 2002, and of Singapore's A*STAR SigN Advisory Board from 2007 to 2010. He is currently on the Scientific Advisory Board of the Italian Institute of Technology.
\end{IEEEbiography}
\vspace{-20 mm}
\begin{IEEEbiography} [{\includegraphics[width=1in,height=1.25in,clip,keepaspectratio]{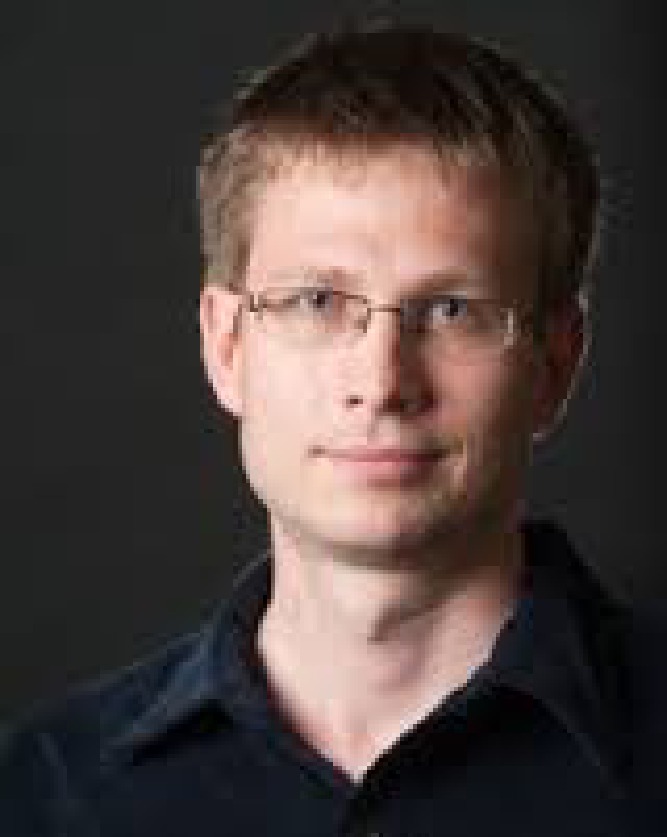}}]{Konstatin Turitsyn}
(M`09) received the M.Sc. degree in physics from Moscow Institute of Physics and Technology and the Ph.D. degree in physics from Landau Institute for Theoretical Physics, Moscow, in 2007.  Currently, he is an Associate Professor at the Mechanical Engineering Department of Massachusetts Institute of Technology (MIT), Cambridge. Before joining MIT, he held the position of Oppenheimer fellow at Los Alamos National Laboratory, and Kadanoff-Rice Postdoctoral Scholar at University of Chicago. His research interests encompass a broad range of problems involving nonlinear and stochastic dynamics of complex systems. Specific interests in energy related fields include stability and security assessment, integration of distributed and renewable generation.
\end{IEEEbiography}

\end{document}